\newcommand{\ru}[1]{\textbf{R#1}}
\newtheorem{theorem}{Theorem}
\newtheorem{corollary}[theorem]{Corollary}
\newtheorem{proposition}[theorem]{Proposition}
\crefname{proposition}{Proposition}{Propositions}
\newtheorem{lemma}[theorem]{Lemma}
\newtheorem{conjecture}[theorem]{Conjecture}
\crefname{claim}{claim}{claims}
\Crefname{claim}{Claim}{Claims}
\DeclareMathOperator{\ad}{ad}
\DeclareMathOperator{\mad}{mad}
\title{$2$-distance list $(\Delta+3)$-coloring of sparse graphs}
\author{Hoang La}
\affil{LIRMM, Université de Montpellier, CNRS, Montpellier, France\\ xuan-hoang.la@lirmm.fr}
\begin{document}
  \maketitle

\begin{abstract}
A $2$-distance list $k$-coloring of a graph is a proper coloring of the vertices where each vertex has a list of at least $k$ available colors and vertices at distance at most 2 cannot share the same color. We prove the existence of a $2$-distance list ($\Delta+3$)-coloring for graphs with maximum average degree less than $\frac83$ and maximum degree $\Delta\geq 4$ as well as graphs with maximum average degree less than $\frac{14}5$ and maximum degree $\Delta\geq 6$.
\end{abstract}

\section{Introduction}

A \emph{$k$-coloring} of the vertices of a graph $G=(V,E)$ is a map $\phi:V \rightarrow\{1,2,\dots,k\}$. A $k$-coloring $\phi$ is a \emph{proper coloring}, if and only if, for all edge $xy\in E,\phi(x)\neq\phi(y)$. In other words, no two adjacent vertices share the same color. The \emph{chromatic number} of $G$, denoted by $\chi(G)$, is the smallest integer $k$ such that $G$ has a proper $k$-coloring.  A generalization of $k$-coloring is $k$-list-coloring.
A graph $G$ is {\em $L$-list colorable} if for a
given list assignment $L=\{L(v): v\in V(G)\}$ there is a proper
coloring $\phi$ of $G$ such that for all $v \in V(G), \phi(v)\in
L(v)$. If $G$ is $L$-list colorable for every list assignment $L$ with $|L(v)|\ge k$ for all $v\in V(G)$, then $G$ is said to be {\em $k$-choosable} or \emph{$k$-list-colorable}. The \emph{list chromatic number} of a graph $G$ is the smallest integer $k$ such that $G$ is $k$-choosable.

In 1969, Kramer and Kramer introduced the notion of 2-distance coloring \cite{kramer2,kramer1}. This notion generalizes the ``proper'' constraint (that does not allow two adjacent vertices to have the same color) in the following way: a \emph{$2$-distance $k$-coloring} is such that no pair of vertices at distance at most 2 have the same color (similarly to proper $k$-list-coloring, one can also define \emph{$2$-distance $k$-list-coloring}). The \emph{$2$-distance chromatic number} of $G$, denoted by $\chi^2(G)$, is the smallest integer $k$ so that $G$ has a 2-distance $k$-coloring. We denote $\chi^2_l(G)$ the $2$-distance list chromatic number of $G$.

For all $v\in V$, we denote $d_G(v)$ the degree of $v$ in $G$ and by $\Delta(G) = \max_{v\in V}d_G(v)$ the maximum degree of a graph $G$. For brevity, when it is clear from the context, we will use $\Delta$ (resp. $d(v)$) instead of $\Delta(G)$ (resp. $d_G(v)$). 
One can observe that, for any graph $G$, $\Delta+1\leq\chi^2(G)\leq \Delta^2+1$. The lower bound is trivial since, in a 2-distance coloring, every neighbor of a vertex $v$ with degree $\Delta$, and $v$ itself must have a different color. As for the upper bound, a greedy algorithm shows that $\chi^2(G)\leq \Delta^2+1$. Moreover, this bound is tight for some graphs (see \cite{lmpv19} for some examples).

By nature, $2$-distance colorings and the $2$-distance chromatic number of a graph depend a lot on the number of vertices in the neighborhood of every vertex. More precisely, the ``sparser'' a graph is, the lower its $2$-distance chromatic number will be. One way to quantify the sparsity of a graph is through its maximum average degree. The \emph{average degree} $\ad$ of a graph $G=(V,E)$ is defined by $\ad(G)=\frac{2|E|}{|V|}$. The \emph{maximum average degree} $\mad(G)$ is the maximum, over all subgraphs $H$ of $G$, of $\ad(H)$. Another way to measure the sparsity is through the girth, i.e. the length of a shortest cycle. We denote $g(G)$ the girth of $G$. Intuitively, the higher the girth of a graph is, the sparser it gets. These two measures can actually be linked directly in the case of planar graphs.

\begin{proposition}[Folklore]\label{maximum average degree and girth proposition}
For every planar graph $G$, $(\mad(G)-2)(g(G)-2)<4$.
\end{proposition}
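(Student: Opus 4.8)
The plan is to combine Euler's formula with the observation that $\mad(G)$ is realized by a subgraph whose girth is at least $g(G)$. First I would fix a subgraph $H\subseteq G$ with $\ad(H)=\mad(G)$. Since the average degree of a graph is a weighted average (with the vertex counts as weights) of the average degrees of its connected components, some component already attains $\mad(G)$, so I may assume $H$ is connected. As a subgraph of a planar graph, $H$ is planar, and since deleting vertices and edges never shortens a cycle, $g(H)\ge g(G)$.

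Next I would dispose of the degenerate case: if $H$ is acyclic then $\ad(H)<2$, so $\mad(G)-2<0$ while $g(G)-2\ge 1$, and the product is negative, hence $<4$. So assume $H$ contains a cycle, and set $n=|V(H)|$, $m=|E(H)|$, and let $f$ be the number of faces of a plane embedding of $H$; Euler's formula gives $n-m+f=2$.

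The heart of the argument is a face count. Summing the lengths of all facial walks counts every edge exactly twice, so $\sum_F \deg(F)=2m$; since $H$ is simple with girth $g(H)$, each facial walk has length at least $g(H)$, whence $g(H)\,f\le 2m$, i.e. $f\le \frac{2m}{g(H)}$. Plugging this into Euler's formula and rearranging gives $m\le \frac{g(H)}{g(H)-2}(n-2)$, and dividing by $n$ together with $n-2<n$ produces the strict bound $\ad(H)=\frac{2m}{n}<\frac{2\,g(H)}{g(H)-2}$.

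Finally I would transfer the bound to $g(G)$. The function $x\mapsto \frac{2x}{x-2}=2+\frac{4}{x-2}$ is decreasing for $x>2$, so from $g(H)\ge g(G)\ge 3$ we obtain $\mad(G)=\ad(H)<\frac{2\,g(G)}{g(G)-2}$, that is $\mad(G)-2<\frac{4}{g(G)-2}$; multiplying by $g(G)-2>0$ yields the claim. The step I expect to need the most care is the facial-length inequality $\deg(F)\ge g(H)$: it is immediate for $2$-connected $H$, where every face is bounded by a cycle, but when $H$ has bridges a facial walk may traverse an edge twice, so I would justify it by noting that every bounded face is enclosed by a cycle (hence its walk has length $\ge g(H)$) and, if necessary, reduce the outer-face case to the $2$-connected blocks of $H$.
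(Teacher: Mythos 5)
Your proof is correct, but there is nothing in the paper to compare it against: the proposition is labelled ``Folklore'' and stated without any proof, serving only as a black box to convert the maximum-average-degree hypotheses of the two main theorems into the girth corollaries. What you have written is precisely the standard argument that the folklore label refers to, namely reduce to a connected subgraph $H$ attaining $\mad(G)$, apply Euler's formula together with the face-degree count $g(H)\,f\le\sum_F \deg(F)=2m$, and transfer the resulting bound $\ad(H)<\tfrac{2g(H)}{g(H)-2}$ to $g(G)$ by monotonicity of $x\mapsto 2+\tfrac{4}{x-2}$; the strictness coming from $n-2<n$ is exactly what gives the strict inequality in the statement. Two small remarks. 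First, your degenerate case is slightly misstated: if $G$ contains a cycle $C$, then $\ad(C)=2$ forces $\mad(G)\ge 2$, while any acyclic $H$ has $\ad(H)<2$, so the $\mad$-attaining subgraph can never be acyclic when $g(G)$ is finite; the acyclic case arises only when $G$ itself is a forest, where $g(G)=\infty$ and the inequality holds under the usual convention (or is read as vacuous). Second, you correctly isolate the one step that genuinely needs care, $\deg(F)\ge g(H)$ for every face $F$. This does hold for every connected plane graph containing a cycle: if the boundary of a face $F$ contained no cycle, the boundary walk (a closed walk in a forest) would traverse every boundary edge exactly twice, so both sides of every boundary edge and every corner at a boundary vertex would lie on $F$, making $F\cup\partial F$ simultaneously open and closed, hence the whole sphere, which would force $H$ to be that forest --- contradicting that $H$ has a cycle. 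Your block-decomposition sketch is another workable way to make this precise, so the argument is complete in substance.
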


A graph is \emph{planar} if one can draw its vertices with points on the plane, and edges with curves intersecting only at its endpoints. When $G$ is a planar graph, Wegner conjectured in 1977 that  $\chi^2(G)$ becomes linear in $\Delta(G)$:

\begin{conjecture}[Wegner \cite{wegner}]
\label{conj:Wegner}
Let $G$ be a planar graph with maximum degree $\Delta$. Then,
$$
\chi^2(G) \leq \left\{
    \begin{array}{ll}
        7, & \mbox{if } \Delta\leq 3, \\
        \Delta + 5, & \mbox{if } 4\leq \Delta\leq 7,\\
        \left\lfloor\frac{3\Delta}{2}\right\rfloor + 1, & \mbox{if } \Delta\geq 8.
    \end{array}
\right.
$$
\end{conjecture}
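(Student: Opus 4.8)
The plan is to attack the conjecture by the \emph{discharging method}, which is the standard framework for establishing linear bounds on $\chi^2$ of planar graphs and is the natural companion to the sparsity language introduced above. I would fix $\Delta$ and, for the relevant bound $k=k(\Delta)$ from the three cases, suppose for contradiction that $G$ is a counterexample minimizing $|V(G)|+|E(G)|$; minimality lets me assume $G$ is connected and that every proper subgraph admits a $2$-distance $k$-coloring. The argument then rests on two ingredients: a catalogue of \emph{reducible configurations} that cannot occur in $G$, and a discharging phase that derives a contradiction from their absence.

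For the reducibility step I would show that certain local structures — a vertex of small degree whose neighborhood carries few distance-$2$ constraints, adjacent low-degree vertices, or a low-degree vertex lying on a short face — force the extension of a coloring of a smaller graph. The core estimate is that a vertex $v$ sees at most $\sum_{u\sim v} d(u)$ vertices within distance $2$, so whenever this quantity is below $k$ the color of $v$ can be chosen greedily; planarity enters precisely by forbidding too many high-degree neighbors from clustering around a single low-degree vertex. For the discharging step I would assign to each vertex $x$ the initial charge $d(x)-4$ and to each face $f$ the charge $\ell(f)-4$, so that Euler's formula yields a total charge of $-8$, and then design redistribution rules sending charge from high-degree vertices and long faces toward the low-degree vertices and short faces that reducibility has eliminated, aiming to render every final charge nonnegative and contradict the negative total.

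The main obstacle — and the reason this statement is a conjecture rather than a theorem — is calibrating this machinery to the \emph{exact} constant $\tfrac32$ in the dense regime $\Delta\ge 8$. The sparsity tools of the present paper, via \cref{maximum average degree and girth proposition}, control only planar graphs of large girth, where $\mad(G)$ is bounded well below $6$; but the extremal configurations for Wegner's bound are triangulation-like, with $\mad$ approaching $6$, so a purely mad-based reduction cannot reach them. What is provable by these methods, and what I would set as intermediate targets, are the large-girth cases together with weaker additive or asymptotic forms: a bound of the shape $\chi^2(G)\le \big(\tfrac32+o(1)\big)\Delta$, or $\tfrac53\Delta+O(1)$, lies within range of a carefully engineered discharging argument, whereas closing the gap to the tight $\lfloor 3\Delta/2\rfloor+1$ — together with the small-$\Delta$ cases $\Delta\le 7$, which demand bespoke case analyses — is exactly where every known approach currently stalls.
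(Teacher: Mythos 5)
The statement you were asked to prove is Wegner's conjecture, open since 1977; the paper contains no proof of it and offers it only as motivating context, citing partial results (\cite{wegner}, \cite{tho18}, \cite{har16}, \cite{havet}). So there is no ``paper's own proof'' to compare against, and your proposal --- correctly --- does not claim to be a proof either: it is a plan whose final paragraph concedes that the tight constant $\lfloor 3\Delta/2\rfloor+1$ and the small-$\Delta$ cases are out of reach. That concession is the honest and accurate assessment, but it also means the proposal has a genuine, unavoidable gap: the reducibility catalogue and the discharging rules are never specified, and no known specification of them closes the argument. In particular, your own diagnosis of why is the right one: the extremal configurations for Wegner's bound are triangulation-like, with $\mad$ approaching $6$, so the sparsity route through \Cref{maximum average degree and girth proposition} --- which is all the present paper uses --- only bites for large girth and cannot see the dense regime. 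The greedy estimate you state (a vertex sees at most $\sum_{u\sim v} d(u)$ vertices at distance two) is likewise far too weak near the truth: for $\Delta\geq 8$ one must beat roughly $2\Delta$ down to $\frac32\Delta$, which no local counting argument achieves.

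Two calibration points in your last paragraph deserve correction. First, the case $\Delta\leq 3$ is not open: the bound $\chi^2(G)\leq 7$ for planar subcubic graphs was proved by Thomassen \cite{tho18} and independently by Hartke, Jahanbekam and Thomas \cite{har16}, so that ``bespoke case analysis'' has in fact been carried out. Second, the asymptotic bound $\chi^2(G)\leq\left(\frac32+o(1)\right)\Delta$ is a theorem of Havet, van den Heuvel, McDiarmid and Reed \cite{havet}, but it is \emph{not} obtained by ``a carefully engineered discharging argument'' as you suggest; it rests on probabilistic and fractional relaxation machinery quite different from discharging. What discharging does deliver is the weaker additive form you mention, of the shape $\frac53\Delta+O(1)$ (Molloy and Salavatipour), and results like the two theorems of this paper, where an upper bound on $\mad$ replaces planarity entirely. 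So your structural sketch (minimal counterexample, reducible configurations, vertex charge $d(x)-4$ and face charge $\ell(f)-4$ summing to $-8$ by Euler's formula) is the standard and sensible opening, but as written it proves nothing beyond what the cited literature already contains, and the statement itself remains a conjecture.
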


The conjecture was proven for some cases (\cite{wegner}, \cite{tho18}, \cite{har16}, \cite{havet}) and some subfamilies of planar graphs \cite{lwz03}.

Wegner's conjecture motivated extensive researches on $2$-distance chromatic number of sparse graphs, either of planar graphs with high girth or of graphs with upper bounded maximum average degree which are directly linked due to \Cref{maximum average degree and girth proposition}. For a survey of the work done on $2$-distance coloring of planar graphs with high girth, see \cite{lmpv19}.

In this article, we prove the following theorems:

\begin{theorem} \label{main theorem1}
If $G$ is a graph with $\mad(G)<\frac83$ and $\Delta(G)\geq 4$, then $\chi^2_l(G)\leq \Delta(G)+3$.
\end{theorem}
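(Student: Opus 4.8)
The plan is to argue by contradiction using the discharging method, which is the standard technique for this kind of sparse-graph coloring result. Suppose $G$ is a counterexample with the fewest vertices (plus edges) to Theorem~\ref{main theorem1}: that is, $G$ has $\mad(G)<\frac83$, $\Delta(G)\ge 4$, and $G$ admits no $2$-distance list $(\Delta+3)$-coloring, but every proper subgraph of $G$ does. I would first record the elementary closure properties of this minimal counterexample: $G$ is connected, and since deleting edges only lowers the maximum average degree while the coloring condition is a "distance-$\le 2$" constraint, the minimality lets me assume $G$ contains no isolated or otherwise trivially-reducible structures.

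Key structural lemmas (reducible configurations).

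The heart of the argument is to prove that $G$ cannot contain certain local configurations, because each such configuration could be deleted, the smaller graph colored by minimality, and the coloring extended back to $G$ — contradicting that $G$ is uncolorable. The first reducible configurations I would establish concern low-degree vertices. A vertex $v$ of degree $d$ has at most $\Delta\cdot d$ vertices at distance $\le 2$ (counting $v$'s neighbors and their neighbors), so a vertex that sees few enough forbidden colors can always be colored last from its list of size $\Delta+3$. Concretely, I expect to show that $G$ has no vertex of degree $1$ whose neighbor has low degree, and more generally to forbid various small trees and short paths of degree-$2$ vertices hanging off the graph. I would count, for each candidate configuration $H$, the number of colors already forbidden on the vertices of $H$ when we try to extend a coloring of $G\setminus H$, and check that the available-color count strictly exceeds the forbidden count so that a greedy ordering (or a small Hall/list-coloring argument, possibly invoking the Combinatorial Nullstellensatz or a direct matching argument for the trickier configurations) completes the extension.

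Discharging phase.

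Having assembled a list of forbidden configurations, I would assign to each vertex $v$ an initial charge $\mu(v)=d(v)$, so that $\sum_v \mu(v)=2|E|<\frac83|V|$ by the $\mad$ hypothesis; equivalently the average charge is less than $\frac83$. I would then design discharging rules (for instance, having high-degree vertices send a fixed fractional charge to their low-degree neighbors, with the precise fractions tuned to the $\frac83$ threshold) and argue, using the reducibility lemmas to control the local structure, that after redistribution every vertex ends with charge at least $\frac83$. This yields $\frac83|V|\le\sum_v\mu^*(v)=\sum_v\mu(v)<\frac83|V|$, the desired contradiction.

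The main obstacle.

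I expect the genuinely hard part to be the reducibility analysis rather than the discharging bookkeeping. Because the color budget is only $\Delta+3$ — quite tight for a $2$-distance constraint, where a single vertex of degree $\Delta$ together with its neighbors already forces $\Delta+1$ distinct colors — the margin for extending colorings is slim, and for the configurations involving two or more nearby low-degree vertices I anticipate that naive greedy extension fails. There I would need to carefully analyze the bipartite "vertex versus available-color" incidence structure and invoke a Hall-type or list-coloring-kernel argument to guarantee an extension, and then reconcile exactly which configurations are reducible with a discharging scheme tight enough to exploit the full $\frac83$ bound. Balancing these two halves — making the configuration list rich enough for discharging to close while keeping every configuration actually reducible under a $(\Delta+3)$-list budget — is where the real work lies.
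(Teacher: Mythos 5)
Your proposal correctly identifies the general framework that the paper itself uses---minimal counterexample, reducible configurations, discharging tuned to the $\frac83$ threshold (your charge $\mu(v)=d(v)$ with target average $\frac83$ is just an affine rescaling of the paper's $\mu(u)=3d(u)-8$ with target $0$)---but it stops at the level of a plan: no concrete configuration is exhibited, no reducibility lemma is proved, and no discharging rule is stated or verified, so there is nothing that can actually be checked. The real content of the proof is precisely the part you defer: a short, specific list of forbidden configurations (minimum degree at least $2$; no path of two consecutive internal $2$-vertices; no $3$-vertex whose three neighbors are all $2$-vertices, i.e.\ no $(1,1,1)$-vertex; no $3$-vertex with both a $2$-neighbor and a $(1,1,0)$-neighbor; no $3$-vertex with two $(1,1,0)$-neighbors and a third $3$-neighbor), together with three explicit rules (every $3^+$-vertex gives $1$ to each $2$-neighbor; every $4^+$-vertex gives $1$ to each $3$-neighbor; every $(0,0,0)$-vertex gives $1$ to each $(1,1,0)$-neighbor) and a case-by-case verification that every vertex ends with nonnegative charge---which is where the hypothesis $\Delta\geq 4$ is used, since a $d$-vertex with $d\geq 4$ retains at least $3d-8-d\geq 0$. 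None of this is routine to reconstruct from the outline alone: the configuration list and the rules must be balanced against each other exactly, and your proposal acknowledges this ("where the real work lies") without doing it.

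Moreover, the tool you anticipate for the hard configurations---a Hall-type, kernel, or Combinatorial Nullstellensatz argument---is not what is needed, and invoking it in the abstract does not close the gap. The one place where naive greedy extension fails (a $3$-vertex $u$ with a $2$-neighbor $v$ and a $(1,1,0)$-neighbor $w$ whose $2$-neighbors are $w_1,w_2$) is handled by an elementary but very specific list argument: color $G_1-\{u,v,w,w_1,w_2\}$ by minimality, truncate the residual lists to sizes $|L(u)|=|L(v)|=2$, $|L(w_1)|=|L(w_2)|=3$, $|L(w)|=4$, and then split into two cases. If $L(u)\neq L(v)$, color $u$ with a color in $L(u)\setminus L(v)$ and finish greedily in the order $w_1,w_2,w,v$. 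If $L(u)=L(v)$, color $w_1$ with some $c\in L(w_1)\setminus L(u)$, then color $w$ avoiding $L(u)\cup\{c\}$, and finish greedily with $w_2,u,v$. This two-case analysis (which exploits that the lists may differ---exactly the point that makes the result genuinely a list-coloring theorem) is the crux of the reducibility phase, and nothing in your proposal produces it or a substitute for it. As written, the proposal is a description of the discharging method, not a proof of the theorem.
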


\begin{theorem} \label{main theorem2}
If $G$ is a graph with $\mad(G)<\frac{14}5$ and $\Delta(G)\geq 6$, then $\chi^2_l(G)\leq \Delta(G)+3$.
\end{theorem}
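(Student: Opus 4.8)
The plan is to argue by contradiction using the discharging method, the standard tool for results of this shape. Fix $k := \Delta(G)$ and prove the auxiliary statement that every graph $H$ with $\mad(H) < \frac{14}{5}$ and $\Delta(H) \le k$ is $2$-distance $(k+3)$-list-colorable; the theorem is the case $H = G$. Suppose this fails and, among all graphs $H$ admitting a list assignment $L$ with $|L(v)| \ge k+3$ for which $H$ is not $2$-distance $L$-colorable, choose a counterexample $(G,L)$ minimizing $|V(G)|+|E(G)|$. Working with the fixed bound $k+3$ rather than $\Delta(H)+3$ is what lets me delete vertices freely: any proper subgraph $H \subsetneq G$ still satisfies $\mad(H) < \frac{14}{5}$ and $\Delta(H) \le k$ with $k \ge 6$ fixed, so by minimality it is $2$-distance $(k+3)$-list-colorable. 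The engine of every reduction is the elementary colorability criterion: if $G-v$ has already been colored, then $v$ can be colored as long as the number $|N_2(v)|$ of vertices at distance at most $2$ from $v$ is at most $k+2$, since each forbids at most one color from the list $L(v)$ of size $k+3$. One has the crude bound $|N_2(v)| \le \sum_{u\in N(v)}d(u)$, with savings whenever two neighbors of $v$ are adjacent or share a further common neighbor.

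Next I would compile the reducible configurations. The criterion immediately kills $1$-vertices (a $1$-vertex $v$ with neighbor $u$ has $|N_2(v)| \le d(u) \le k < k+2$), so $\delta(G)\ge 2$. For a $2$-vertex $v$ with neighbors $a,b$ the bound reads $|N_2(v)| \le d(a)+d(b)$, so $v$ is reducible whenever $d(a)+d(b) \le k+2$; in particular a $2$-vertex cannot have a $2$-neighbor. Hence every surviving $2$-vertex satisfies $d(a)+d(b) \ge k+3$, forcing a neighbor of degree $\ge \lceil (k+3)/2\rceil \ge 5$. I expect the remaining configurations to bound how many $2$-vertices, and $3$-vertices (which carry very little charge), can cluster around a common low-degree vertex, each handled by deleting the offending light vertices, invoking minimality, and recoloring them in an order for which the criterion applies at every step.

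For the discharging phase I would assign the initial charge $\mu(v) = d(v) - \frac{14}{5}$; since $\mad(G) < \frac{14}{5}$ the total $\sum_v \mu(v) = 2|E(G)| - \frac{14}{5}|V(G)|$ is strictly negative. A $2$-vertex starts with $-\frac45$, a $3$-vertex with $+\frac15$, a $4$-vertex with $+\frac65$, and a $d$-vertex with $\frac{5d-14}{5}$, so only the $2$-vertices and, delicately, the $3$-vertices must be subsidized. The rules will push charge from heavy vertices to their light neighbors — for instance sending a fixed amount such as $\frac25$ across each edge from a $5^+$-vertex to an adjacent $2$-vertex, with auxiliary rules topping up $2$-vertices whose heavy neighbor cannot fully pay and keeping $3$-vertices nonnegative. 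Using the reducible configurations to cap how many light neighbors each vertex must support, I would check that every vertex finishes with charge $\ge 0$, contradicting the negative total. The same scheme with threshold $\frac83$ and hypothesis $\Delta \ge 4$ should yield \Cref{main theorem1}.

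The main obstacle will be concentrated in the two cheapest vertex types. First, the borderline $2$-vertices with $d(a)+d(b)=k+3$ and no second-neighborhood overlap satisfy $|N_2(v)| = k+3$ exactly, so the naive criterion fails by one; recovering the missing unit of slack will require a sharper reduction, deleting $v$ together with a nearby vertex or exploiting a forced color repetition among the second neighbors. Second, because a $3$-vertex carries only $\frac15$ of surplus, the discharging has almost no room to spare, so the rules and their accompanying reducible configurations must be tuned jointly and tightly; making the whole system close for every vertex type at once, rather than any single reduction, is where the real difficulty lies.
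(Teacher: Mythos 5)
Your framework is the same as the paper's: minimal counterexample, reducible configurations established by ``delete, color by minimality, extend greedily,'' then discharging with initial charge $d(v)-\frac{14}{5}$ (the paper uses the equivalent scaled charge $5d(v)-14$). Your setup is in one respect more careful than the paper's: by fixing $k=\Delta(G)$ and taking a minimal counterexample to the statement ``$\mad(H)<\frac{14}{5}$ and $\Delta(H)\le k$ implies $2$-distance $(k+3)$-list-colorability,'' you sidestep the technicality that a subgraph of the counterexample might have maximum degree below $6$, so that the theorem's hypotheses would not apply to it. But what you have written is a plan, not a proof, and the gap sits exactly where you admit ``the real difficulty lies.'' Your reducible configurations are only the trivial ones ($\delta\ge 2$, no adjacent $2$-vertices, the degree-sum bound $d(a)+d(b)\ge k+3$). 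The proof needs far more specific configurations about $1$-paths: a $(1,1,0)$-vertex can only share a common $2$-neighbor with a $\Delta$-vertex and must have a $(\Delta-1)^+$-neighbor (Lemmas \ref{011-5} and \ref{110-4}); a $(1,0,0)$-vertex cannot have two $3$-neighbors (\Cref{3-010-3}); and analogous statements for $(1,1,1,1)$- and $(1,1,1,0)$-vertices (Lemmas \ref{1111-5} and \ref{4-1110-3}). None of these are stated, let alone proved, in your proposal, and they are precisely what the discharging verification consumes.

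The second missing ingredient is structural rather than a matter of detail: all of your sketched rules are edge-local (heavy vertices paying adjacent light vertices), plus unspecified ``auxiliary top-ups.'' The crux of the paper's argument is that charge must travel \emph{distance two}, across $1$-paths: rules \ru2(i)--(ii) make the far endvertex of a $1$-path pay the near endvertex ($\frac15$ from a $5$-vertex, $\frac23$ from a $6^+$-vertex, in scaled units). This is what rescues a $(1,1,0)$-vertex, which starts with charge $1$, pays $2$ to each of its two $2$-neighbors by \ru0, and survives only because the structural lemmas force its distance-two helpers to be $\Delta$-vertices and its third neighbor to be a $(\Delta-1)^+$-vertex, which send charge back ($-3+2\cdot\frac23+2=\frac13$). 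Without distance-two transfers, your scheme cannot close: a $2$-vertex adjacent to a $3$-vertex and a $6$-vertex gets at most $\frac25$ (unscaled) from its heavy neighbor, and the $3$-vertex, holding only $\frac15$, cannot cover the rest. Finally, your proposed cure for the borderline $2$-vertices with $d(a)+d(b)=k+3$ --- ``a sharper reduction'' eliminating them --- is a dead end: such a $2$-vertex (e.g., between a $3$-vertex and a $6$-vertex when $\Delta=6$) cannot be excluded by a local deletion-and-recoloring argument, because the uncolored $3$-neighbor may itself see up to $\Delta+8$ colored vertices and cannot always be recolored. The paper never removes these vertices; it pays them in full by \ru0 and shifts the deficit onto their low-degree neighbors, which is exactly what forces the distance-two rules and the accompanying lemmas. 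Until you specify the configurations and rules and carry out the vertex-by-vertex verification, the theorem is not proved.
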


\Cref{main theorem1} improves upon a previous result for graphs with $\Delta(G)=5$ \cite{bu15}.

Due to \Cref{maximum average degree and girth proposition}, we get the following corollaries for planar graphs:

\begin{corollary}
If $G$ is a graph with $g(G)\geq 8$ and $\Delta(G)\geq 4$, then $\chi^2_l(G)\leq \Delta(G)+3$.
\end{corollary}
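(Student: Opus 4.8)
The plan is to obtain this corollary directly from \Cref{main theorem1}, using \Cref{maximum average degree and girth proposition} to convert the girth hypothesis into a bound on the maximum average degree. Since \Cref{maximum average degree and girth proposition} is stated for planar graphs, I read the corollary as concerning planar $G$, as announced in the sentence introducing it. The whole argument reduces to checking that $g(G)\ge 8$ forces $\mad(G)<\frac83$, after which the main theorem applies verbatim.

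First I would dispose of the degenerate case $\mad(G)\le 2$ (which in particular covers forests, where the girth is infinite): here $\mad(G)<\frac83$ holds trivially, and \Cref{main theorem1} gives $\chi^2_l(G)\le\Delta(G)+3$ because $\Delta(G)\ge 4$. Otherwise $\mad(G)>2$, so the quantity $\mad(G)-2$ is strictly positive. I then invoke \Cref{maximum average degree and girth proposition}, which gives $(\mad(G)-2)(g(G)-2)<4$. The hypothesis $g(G)\ge 8$ means $g(G)-2\ge 6$, and since $\mad(G)-2>0$ I may replace the factor $g(G)-2$ by its lower bound without reversing the inequality, obtaining
$$6\,(\mad(G)-2)\le (\mad(G)-2)(g(G)-2)<4.$$
Hence $\mad(G)-2<\frac23$, that is, $\mad(G)<\frac83$. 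In either case $\mad(G)<\frac83$ and $\Delta(G)\ge 4$, so \Cref{main theorem1} yields $\chi^2_l(G)\le\Delta(G)+3$, as claimed.

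There is essentially no obstacle in this argument, as it is purely a matter of combining the main theorem with the folklore bound. The only point requiring a little care is the monotonicity step, where the positivity of $\mad(G)-2$ must be used to substitute the lower bound $6$ for $g(G)-2$; splitting off the case $\mad(G)\le 2$ beforehand makes this completely rigorous and also handles the acyclic case cleanly. The corollary is therefore immediate from \Cref{main theorem1} and \Cref{maximum average degree and girth proposition}.
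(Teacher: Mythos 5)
Your proof is correct and follows the same route as the paper, which derives this corollary simply by combining \Cref{maximum average degree and girth proposition} with \Cref{main theorem1}. Your additional care with the case $\mad(G)\le 2$ (covering acyclic graphs) and with the positivity of $\mad(G)-2$ only makes explicit what the paper leaves implicit.
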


\begin{corollary}
If $G$ is a graph with $g(G)\geq 7$ and $\Delta(G)\geq 6$, then $\chi^2_l(G)\leq \Delta(G)+3$.
\end{corollary}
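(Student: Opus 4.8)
The plan is to derive this corollary directly from \Cref{main theorem2} by controlling the maximum average degree through the girth, since the statement (as announced in the text preceding the corollaries) concerns planar graphs. The only additional tool needed is the folklore bound of \Cref{maximum average degree and girth proposition}, which ties $\mad$ to $g$ for planar graphs, so the entire argument is a two-step chaining of already-established results.

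First I would invoke \Cref{maximum average degree and girth proposition}: for a planar graph $G$ we have $(\mad(G)-2)(g(G)-2)<4$. Since by hypothesis $g(G)\geq 7$, the factor $g(G)-2\geq 5$ is strictly positive, so I may divide both sides by it (which preserves the inequality) to obtain $\mad(G)-2<\frac{4}{g(G)-2}\leq\frac{4}{5}$, and therefore $\mad(G)<2+\frac{4}{5}=\frac{14}{5}$.

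Having established $\mad(G)<\frac{14}{5}$, and recalling the hypothesis $\Delta(G)\geq 6$, I would then apply \Cref{main theorem2} verbatim to the graph $G$ and conclude that $\chi^2_l(G)\leq\Delta(G)+3$, which is precisely the asserted bound.

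The main (and essentially only) delicate point is the bookkeeping with the strict inequality: one must verify that the girth threshold $g(G)\geq 7$ forces $\mad(G)<\frac{14}{5}$ strictly, rather than merely $\mad(G)\leq\frac{14}{5}$, so that the strict hypothesis $\mad(G)<\frac{14}{5}$ of \Cref{main theorem2} is genuinely satisfied; here the strictness is inherited from the strict inequality in \Cref{maximum average degree and girth proposition}. I would note that the companion corollary (for $g(G)\geq 8$ and $\Delta(G)\geq 4$) follows in exactly the same fashion, using $g(G)-2\geq 6$ to get $\mad(G)<2+\frac{4}{6}=\frac{8}{3}$ and then appealing to \Cref{main theorem1} instead.
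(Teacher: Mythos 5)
Your proposal is correct and is exactly the argument the paper intends: the corollary is stated as an immediate consequence of \Cref{maximum average degree and girth proposition} (giving $\mad(G)<2+\frac{4}{5}=\frac{14}{5}$ for a planar graph with $g(G)\geq 7$) chained with \Cref{main theorem2}. You also rightly noted that planarity, though omitted from the corollary's wording, is implicit from the surrounding text, and your attention to the strictness of the inequality is exactly the right bookkeeping.
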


\paragraph{Notations and drawing conventions.} For $v\in V(G)$, the \emph{2-distance neighborhood} of $v$, denoted $N^*_G(v)$, is the set of 2-distance neighbors of $v$, which are vertices at distance at most two from $v$, not including $v$. We also denote $d^*_G(v)=|N^*_G(v)|$. We will drop the subscript and the argument when it is clear from the context. Also for conciseness, from now on, when we say ``to color'' a vertex, it means to color such vertex differently from all of its colored neighbors at distance at most two. Similarly, any considered coloring will be a 2-distance coloring. We will also say that a vertex $u$ ``sees'' another vertex $v$ if $u$ and $v$ are at distance at most 2 from each other. 

Some more notations:
\begin{itemize}
\item A \emph{$d$-vertex} ($d^+$-vertex, $d^-$-vertex) is a vertex of degree $d$ (at least $d$, at most $d$).
\item A \emph{$k$-path} ($k^+$-path, $k^-$-path) is a path of length $k+1$ (at least $k+1$, at most $k+1$) where the $k$ internal vertices are 2-vertices.
\item A \emph{$(k_1,k_2,\dots,k_d)$-vertex} is a $d$-vertex incident to $d$ different paths, where the $i^{\rm th}$ path is a $k_i$-path for all $1\leq i\leq d$.
\end{itemize}

In both proofs, we will consider $G_1$ (resp. $G_2$) a counter-example to \Cref{main theorem1} (resp. \Cref{main theorem2}) with the fewest number of vertices. The purpose of the proofs is to prove that $G_1$ (resp. $G_2$) cannot exist. We will always start by studying their structural properties, then apply a discharging procedure.

\section{Proof of \Cref{main theorem1}}
\label{sec2}

Since $\mad(G_1)<\frac83$, we have 
\begin{equation}\label{equation1}
\sum_{u\in V(G)}(3d(u)-8) < 0
\end{equation}

We assign to each vertex $u$ the charge $\mu(u)=3d(u)-8$. To prove the non-existence of $G_1$, we will redistribute the charges preserving their sum and obtaining a non-negative total charge, which will contradict \Cref{equation1}. Let us study the structural properties of $G_1$ first.

\subsection{Structural properties of $G_1$\label{tutu1}}

\begin{lemma}\label{connected1}
Graph $G_1$ is connected.
\end{lemma}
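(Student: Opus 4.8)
The plan is to argue by minimality. Suppose for contradiction that $G_1$ is disconnected, and write it as the disjoint union of its connected components $C_1,\dots,C_k$ with $k\geq 2$. The crucial observation is that a $2$-distance coloring of a disconnected graph is nothing more than the union of independent $2$-distance colorings of its components: any two vertices lying in distinct components are at infinite distance, hence in particular at distance more than $2$, so their colors are never constrained against one another. Consequently, to $2$-distance color $G_1$ from a list assignment $L$ it suffices to $2$-distance color each $C_i$ from the restriction of $L$ to $V(C_i)$.

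First I would record that each component inherits the hypotheses up to the maximum degree. Since $C_i$ is a subgraph of $G_1$, we have $\mad(C_i)\leq\mad(G_1)<\frac83$ and $\Delta(C_i)\leq\Delta(G_1)$, so the restricted lists still satisfy $|L(v)|\geq\Delta(G_1)+3\geq\Delta(C_i)+3$ for every $v\in V(C_i)$. As $k\geq 2$, each $C_i$ has strictly fewer vertices than $G_1$.

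Now I would invoke minimality. Because $G_1$ is a counter-example, at least one component, say $C$, fails to be $2$-distance list $(\Delta(G_1)+3)$-colorable. If $\Delta(C)\geq 4$, then $C$ satisfies the full hypotheses of \Cref{main theorem1}; being smaller than the minimum counter-example $G_1$, it cannot itself be a counter-example, so it is $2$-distance list $(\Delta(C)+3)$-colorable, and a fortiori colorable from the larger lists of size at least $\Delta(G_1)+3$ --- a contradiction. Combining the colorings of all components then $2$-distance colors $G_1$, contradicting its choice.

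The main obstacle is the borderline case $\Delta(C)\leq 3$, where \Cref{main theorem1} does not apply and minimality yields nothing directly. Here I would supply a separate, self-contained argument that every graph with $\mad<\frac83$ and maximum degree at most $3$ is $2$-distance $7$-list-colorable (recall $\Delta(G_1)+3\geq 7$). This is believable because $\mad<\frac83<3$ forces such a graph to be $2$-degenerate and very sparse, so a short \emph{ad hoc} coloring --- peeling off vertices of degree at most $2$ in a careful order, or a miniature discharging argument on the ``square'' of the graph --- should close the gap and complete the contradiction.
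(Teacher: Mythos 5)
Your overall strategy is exactly the paper's: the published proof is the single sentence ``Otherwise a component of $G_1$ would be a smaller counterexample,'' and your main case spells out correctly what that sentence leaves implicit --- colorings of distinct components are independent, $\mad$ is inherited by subgraphs, and $(\Delta(C)+3)$-choosability of a component implies colorability from the larger lists of size $\Delta(G_1)+3$ by monotonicity of choosability. You also put your finger on the one real subtlety, which the paper's one-liner glosses over: a failing component $C$ with $\Delta(C)\leq 3$ does not satisfy the hypothesis $\Delta\geq 4$ of \Cref{main theorem1}, so minimality by itself says nothing about it.

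However, your treatment of that case is a genuine gap, not a proof. You only assert that a short ad hoc argument ``should close the gap,'' and the methods you hint at do not obviously work. In particular, $2$-degeneracy is not enough: in a $2$-degenerate ordering of a subcubic graph, a vertex can have up to $8$ previously colored vertices within distance $2$ (two earlier neighbours, each bringing at most two further vertices, plus up to two earlier vertices seen through a later neighbour), so greedy coloring along that ordering needs $9$ colors, not $7$; relatedly, the square of such a graph need not be $6$-degenerate. An honest proof that $\mad<\frac83$ and $\Delta\leq 3$ imply $2$-distance $7$-list-colorability appears to require discharging machinery on the scale of Section~\ref{sec2} itself (the paper's lemmas do go through for $\Delta=3$ with $7$ colors, but that must be checked lemma by lemma), or a citation to the subcubic literature --- either way, far more than a remark. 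The clean repair, and the reading that makes the paper's one-liner rigorous, is to apply minimality with the palette size held fixed: prove, for fixed $k\geq 7$, that every graph $H$ with $\mad(H)<\frac83$ and $\Delta(H)\leq k-3$ is $2$-distance $k$-list-colorable, and take $G_1$ minimal for $k=\Delta(G_1)+3$. Every proof in Section~\ref{tutu1} uses only ``at least $7$ colors'' and ``at least $\Delta(G_1)+3$ colors,'' so nothing changes, and now every component of a disconnected $G_1$ inherits all hypotheses; whichever component fails is then a smaller counterexample, with no case distinction on $\Delta(C)$ at all.
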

Otherwise a component of $G_1$ would be a smaller counterexample.

\begin{lemma}\label{minimumDegree1}
The minimum degree of $G_1$ is at least 2.
\end{lemma}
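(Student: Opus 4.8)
The plan is to argue by contradiction using the minimality of $G_1$ together with a greedy extension. Suppose $G_1$ contains a vertex $v$ with $d(v)\leq 1$. First I would dispose of the case $d(v)=0$: by \Cref{connected1} a vertex of degree $0$ forces $G_1$ to be a single isolated vertex, which is trivially $2$-distance colorable from any nonempty list, contradicting that $G_1$ is a counterexample. So I may assume $d(v)=1$ and let $u$ be the unique neighbor of $v$.

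Next I would delete $v$ and set $G'=G_1-v$. Since $G'$ is a subgraph of $G_1$, we have $\mad(G')\leq\mad(G_1)<\frac83$, and $G'$ has strictly fewer vertices than $G_1$. By the minimality of $G_1$, the graph $G'$ is not a counterexample, so it admits a $2$-distance coloring $\phi$ from the assigned lists, each of size at least $\Delta(G_1)+3$.

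It remains to extend $\phi$ to $v$. Here I would count the $2$-distance neighborhood: $N^*(v)$ consists of $u$ together with the other neighbors of $u$, so $d^*(v)=d(u)\leq\Delta(G_1)$. Hence at most $\Delta(G_1)$ colors are forbidden at $v$, while $|L(v)|\geq\Delta(G_1)+3$, leaving at least three admissible colors. Choosing any of them yields a valid $2$-distance list coloring of $G_1$, the desired contradiction.

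The step requiring the most care is the application of minimality to $G'$: deleting $v$ can lower the maximum degree, precisely when $u$ is the unique vertex of degree $\Delta(G_1)$ and $\Delta(G_1)=4$, which gives $\Delta(G')=3$. I would handle this by observing that we only need $G'$ to be colorable from lists of size $\Delta(G_1)+3$, and these are at least as large as $\Delta(G')+3$; thus any coloring guaranteed for the smaller target parameter also works from the larger lists, provided the low-degree regime $\Delta\leq 3$ is either absorbed into the induction hypothesis or checked separately. Everything else reduces to the elementary neighbor count above.
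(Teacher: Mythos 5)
Your proposal follows essentially the same approach as the paper's proof: dispose of degree $0$ via connectedness, delete the degree-$1$ vertex $v$, color $G_1-v$ by minimality, and extend greedily since $v$ sees only $d(u)\leq\Delta(G_1)$ colored vertices while having $\Delta(G_1)+3$ available colors. Your closing observation---that deleting $v$ could drop the maximum degree below $4$, so minimality must really be understood as ranging over graphs of maximum degree \emph{at most} $\Delta(G_1)$ colored with $\Delta(G_1)+3$ colors---is a genuine subtlety that the paper's one-line proof glosses over, and the fix you sketch (folding the fixed list size into the induction hypothesis) is the standard repair.
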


\begin{proof}
By \Cref{connected1}, the minimum degree is at least 1 or $G_1$ would be a single isolated vertex which is $(\Delta(G_1)+3)$-colorable. If $G_1$ contains a degree 1 vertex $v$, then we can simply remove such vertex and 2-distance color the resulting graph, which is possible by minimality of $G_1$. Then, we add $v$ back and color $v$ (at most $\Delta(G_1)$ constraints and $\Delta(G_1)+3$ colors).
\end{proof}

\begin{lemma}\label{2-path1}
Graph $G_1$ has no $2^+$-paths.
\end{lemma}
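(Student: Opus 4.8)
The plan is a reducibility argument driven by the minimality of $G_1$. Since any $2^+$-path contains two consecutive internal $2$-vertices, and conversely any edge between two $2$-vertices extends (through their outer neighbours) to a genuine $2$-path, it suffices to prove that $G_1$ has no edge $uv$ with $d(u)=d(v)=2$. So I would assume for contradiction that such $u,v$ exist, and denote by $x$ the second neighbour of $u$ and by $y$ the second neighbour of $v$; for a genuine path these are distinct from each other and from $u,v$.

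First I would set $H:=G_1-\{u,v\}$. As $\mad$ is monotone under subgraphs, $\mad(H)\le\mad(G_1)<\frac83$, and $H$ has fewer vertices than $G_1$, so by minimality $H$ has a $2$-distance list $(\Delta+3)$-coloring $\phi$ from the lists inherited from $G_1$. Removing \emph{both} $2$-vertices (rather than one) is what makes $\phi$ consistent with the constraints of $G_1$: every pair of surviving vertices joined through $u$ or through $v$ would have to lie in $N(u)=\{x,v\}$ or $N(v)=\{u,y\}$, which contain only the single surviving vertex $x$, respectively $y$, so no distance-$2$ constraint among the vertices of $H$ is lost. (Deleting only $v$, say, would leave the constraint between $u$ and $y$ unmonitored.) It then remains to extend $\phi$ to $u$ and $v$ greedily. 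When I color $u$, its already-colored $2$-distance neighbours lie in $\{x,y\}\cup(N(x)\setminus\{u\})$ since $v$ is still uncolored, so at most $2+(\Delta-1)=\Delta+1$ colors are forbidden and at least two remain in its list of size $\Delta+3$. When I then color $v$, its colored $2$-distance neighbours lie in $\{u,x,y\}\cup(N(y)\setminus\{v\})$, so at most $3+(\Delta-1)=\Delta+2$ colors are forbidden and at least one remains. The resulting coloring of $G_1$ contradicts the choice of $G_1$ as a counterexample.

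The argument is essentially careful bookkeeping of $N^*(u)$ and $N^*(v)$, so I expect no deep obstacle. The two points that genuinely need care are the consistency observation above—that deleting both degree-$2$ vertices orphans no distance-$2$ constraint among the remaining vertices—and the tacit use of minimality on $H$: the sparsity hypothesis transfers by monotonicity of $\mad$, and by keeping the palette fixed at $\Delta(G_1)+3$ throughout, a smaller graph of maximum degree at most $\Delta(G_1)$ is always colored from lists at least as long as it requires. The counts themselves never use $\Delta\ge 4$, so this particular configuration is reducible for every $\Delta$.
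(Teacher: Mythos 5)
Your proof is correct and takes essentially the same approach as the paper: delete the two adjacent internal $2$-vertices of the $2^+$-path, color the remaining graph by minimality, and extend greedily, each deleted vertex seeing at most $\Delta+1$ (then $\Delta+2$) colored vertices among $\Delta+3$ available colors. The paper's version is simply terser; your explicit verification that deleting both $2$-vertices orphans no distance-$2$ constraint among surviving vertices is left implicit there.
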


\begin{proof}
Suppose that $G_1$ contains a $2^+$-path $uvwx$. We color $G_1-\{v,w\}$ by minimality of $G_1$. Observe that $v$ and $w$ each sees at most $\Delta+1$ colors so they have at least two available colors left each. Thus, we can easily extend the coloring to $v$ and $w$.
\end{proof}

\begin{lemma}\label{(111)1}
Graph $G_1$ has no $(1,1,1)$-vertex.
\end{lemma}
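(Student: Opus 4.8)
The plan is to use the same discharging-free, minimality-based reducibility argument as in the previous lemmas. Recall that a $(1,1,1)$-vertex is a $3$-vertex $v$ whose three incident paths are each $1$-paths, meaning $v$ is adjacent to three $2$-vertices $a,b,c$, each of which has a second neighbor (call them $a',b',c'$) outside $v$. Suppose for contradiction that $G_1$ contains such a configuration. First I would delete the ``pendant'' structure hanging off $v$ and apply minimality of $G_1$ to color what remains, then extend the coloring greedily by counting, for each uncolored vertex, how many colors are forbidden versus the $\Delta+3$ available.

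The key is to choose the deleted set carefully so that every uncolored vertex ends up with strictly fewer constraints than available colors, and to order the re-coloring so that the most constrained vertices (the $2$-vertices $a,b,c$) are colored last. Concretely, I would color $G_1 - \{v,a,b,c\}$ by minimality. Then I color $v$ first: $v$ sees its three neighbors $a,b,c$ (currently uncolored) and their second neighbors $a',b',c'$, so $v$ sees at most $6$ colored vertices; since $\Delta\geq 4$ gives $\Delta+3\geq 7$, $v$ has an available color. Finally I color $a,b,c$. Each $2$-vertex, say $a$, sees $v$, $a'$, and the two remaining neighbors of $v$ (namely $b,c$) together with $a'$'s other neighbors; I must check that the number of distinct colors $a$ sees is at most $\Delta+2$, so that at least one color remains. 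The degree of $a'$ is at most $\Delta$, so $a$ sees at most $(\Delta-1)$ colors through $a'$, plus $v$, plus $b$ and $c$: that is at most $\Delta+2$ forbidden colors out of $\Delta+3$, leaving a valid choice.

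The main obstacle I anticipate is the bookkeeping in the last step: the vertices $a,b,c$ mutually see one another (they are all at distance $2$ via $v$), so they cannot be colored independently, and I must verify that coloring them in sequence never exhausts the list. The worst case is the final $2$-vertex colored, which sees the two already-colored sibling $2$-vertices, the vertex $v$, and up to $\Delta-1$ neighbors-of-neighbors through its own far endpoint; I would confirm this totals at most $\Delta+2 < \Delta+3$. A subtle point is whether the far endpoints $a',b',c'$ could coincide or could themselves be adjacent to $v$'s neighborhood, but such coincidences only \emph{reduce} the number of distinct forbidden colors, so the bound is safe. If the naive count is too tight in some degenerate case, the fallback is to delete a slightly larger set (e.g. also removing one far endpoint) or to exploit \Cref{2-path1}, which forbids $2^+$-paths and hence forces each $a',b',c'$ to be a $3^+$-vertex, constraining the local structure further. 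I expect the straightforward counting to suffice, so the proof should be short, in the same spirit as \Cref{2-path1}.
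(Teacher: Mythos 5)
Your deletion set $\{v,a,b,c\}$ is the same one the paper uses, but your coloring order breaks the proof, and the root cause is an off-by-one error in your counting. You color the center $v$ first and the $2$-vertices last. Consider the last $2$-vertex colored, say $c$, with far endpoint $c'$. Its colored $2$-distance neighbors are: $v$, the two siblings $a$ and $b$, the far endpoint $c'$ \emph{itself}, and the at most $\Delta-1$ other neighbors of $c'$. That is $1+2+1+(\Delta-1)=\Delta+3$ potentially distinct forbidden colors, not $\Delta+2$ as you claim --- your tally omits $c'$ (the same omission appears in your earlier count for $a$, where you list ``$(\Delta-1)$ colors through $a'$, plus $v$, plus $b$ and $c$'' but never $a'$ itself, which is adjacent to $a$ and already colored). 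Nothing forces any two of these $\Delta+3$ vertices to share a color: for instance $a$ is at distance $3$ from $c'$ and at distance at least $3$ from the other neighbors of $c'$, so in the worst case all $\Delta+3$ colors are distinct and lie in the list of $c$, and the extension fails exactly at the last step.

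The fix is to reverse the order, which is precisely what the paper does: color the $2$-vertices $a,b,c$ first and the center $v$ last. Then the $i$-th $2$-vertex colored ($i\in\{1,2,3\}$) sees at most $1+(\Delta-1)+(i-1)\leq \Delta+2$ colors (its far endpoint, that endpoint's other neighbors, and the previously colored siblings), since $v$ is still uncolored at that point; and $v$, colored last, sees only the six vertices $a,b,c,a',b',c'$, hence at most $6$ colors, which is where the hypothesis $\Delta\geq 4$, i.e.\ $\Delta+3\geq 7$, is actually needed. The structural point you intuited is correct --- the center has boundedly many constraints while the $2$-vertices have roughly $\Delta$ --- but that is exactly why the center must be colored \emph{last}, not first: it is the only vertex whose constraint count stays below $7$ regardless of how many of its neighbors are already colored. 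Your remark that coincidences among $a',b',c'$ only reduce the forbidden set is correct, and your fallback via \Cref{2-path1} is unnecessary once the order is reversed.
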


\begin{proof}
Suppose by contradiction that there exists a $(1,1,1)$-vertex $u$ with three $2$-neighbors $u_1$, $u_2$, and $u_3$. We color $G_1-\{u,u_1,u_2,u_3\}$ by minimality of $G_1$, then we extend this coloring to the remaining vertices by coloring $u_1$, $u_2$, $u_3$, and $u$ in this order. Observe that this possible since we have $\Delta+3\geq 7$ colors as $\Delta\geq 4$.
\end{proof}

\begin{lemma}\label{110-001}
Graph $G_1$ has no $3$-vertex with a $2$-neighbor and a $(1,1,0)$-neighbor.
\end{lemma}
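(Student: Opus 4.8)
The plan is to assume this configuration occurs and extend a colouring of a smaller graph, contradicting minimality. Fix notation first: let $u$ be the offending $3$-vertex, let $v$ be its $2$-neighbour (with second neighbour $v'$), let $w$ be its $(1,1,0)$-neighbour, and let $z$ be the third neighbour of $u$. The neighbours of the $(1,1,0)$-vertex $w$ other than the endpoint of its $0$-path are the two midpoints of its $1$-paths, which are $2$-vertices; since $u$ is a $3$-vertex, $u$ must be that $0$-path endpoint. So I write the two $1$-paths as $wa_1b_1$ and $wa_2b_2$, where $a_1,a_2$ are $2$-vertices and, by \Cref{2-path1}, $b_1,b_2,v'$ are $3^+$-vertices. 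I would then delete $S=\{u,v,w,a_1,a_2\}$, colour $G_1-S$ by minimality of $G_1$, and try to extend the colouring to $S$.

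The useful bookkeeping is, for each $x\in S$, the number of $2$-distance neighbours of $x$ lying outside $S$ (hence already coloured). A direct count gives at most $\Delta+1$ for each of $u$ and $v$, at most $\Delta$ for each of $a_1$ and $a_2$, and only $3$ for $w$ (namely $z,b_1,b_2$), since every other $2$-distance neighbour of the $3$-vertex $w$ lies in $S$. On $S$ the $2$-distance graph is the clique $\{u,w,a_1,a_2\}$ together with $v$ joined to $u$ and $w$. Colouring in the order $u,v,a_1,a_2,w$, the already-coloured $2$-distance neighbours seen at each step number at most $\Delta+1$, $\Delta+2$, $\Delta+1$, $\Delta+2$, and $7$. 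All of these are strictly below $\Delta+3$ as soon as $\Delta\geq 5$, so a free colour always remains and the lemma follows in that regime.

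The main obstacle is $\Delta=4$, where $\Delta+3=7$ and the final vertex $w$ may see all $7$ colours; in fact whichever vertex of $S$ is placed last can be forced to see at least $\Delta+3$ colours, so pure greedy extension cannot succeed. I would instead colour $u,v,a_1,a_2$ first and, if $w$ is then blocked, note that its seven $2$-distance neighbours $u,v,a_1,a_2,z,b_1,b_2$ carry all seven colours, and recolour one of the $2$-vertices $v,a_1,a_2$ to create a repetition inside $N^*(w)$. For instance $a_2$ may be recoloured to any colour missing from those of $u,a_1,b_2$ and of $N(b_2)$; the colours of $v$, $z$, and $b_1$ are candidates, since these vertices lie at distance at least $3$ from $a_2$, and assigning $a_2$ one of them repeats a colour among $w$'s neighbours and frees $w$'s old blocked colour.

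The crux is to guarantee that some such recolouring is legal. Because the colours on $u,a_1,b_2$ are among the seven distinct colours of $N^*(w)$, each candidate target for $a_2$ can only be blocked by a colour appearing on $N(b_2)$, and symmetrically each candidate target for $a_1$ can only be blocked by a colour on $N(b_1)$, and each for $v$ only by a colour on $N(v')$ or on $v'$ itself. I expect the delicate step to be a pigeonhole argument across these three $2$-vertices — and, should that not suffice, exploiting the second colour still available when $u$ was coloured — to exclude the single degenerate configuration in which every candidate target is simultaneously blocked.
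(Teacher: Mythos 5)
Your deletion set $S=\{u,v,w,a_1,a_2\}$ is exactly the one the paper uses, and your greedy count settling $\Delta\geq 5$ is correct. But the only substantive case of this lemma is $\Delta=4$ (the theorem assumes $\Delta\geq 4$, and larger $\Delta$ is easy precisely as you computed), and there your argument has a genuine gap — one you flag yourself but do not close. Moreover it is not merely an unfinished computation: the strategy as described (color $u,v,a_1,a_2$ greedily in that order, then repair by recoloring a single vertex of $S$) provably cannot be completed. Take $\Delta=4$, all lists equal to $\{1,\dots,7\}$, all named vertices distinct, and the following colors on the boundary of $S$: $\phi(z)=1$, $\phi(v')=2$, $\phi(b_1)=2$, $\phi(b_2)=3$; the three neighbors of $z$ other than $u$ colored $3,5,6$; those of $v'$ other than $v$ colored $3,6,7$; those of $b_1$ other than $a_1$ colored $1,3,5$; those of $b_2$ other than $a_2$ colored $1,2,5$. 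Greedy then permits $u=4$ (available $\{4,7\}$), forces $v=5$, permits $a_1=6$ (available $\{6,7\}$), forces $a_2=7$, and $w$ sees all seven colors. Now every one of $u,v,a_1,a_2$ sees six distinct colors among its colored $2$-distance neighbors ($u$ sees $\{1,2,3,5,6,7\}$, $v$ sees $\{1,2,3,4,6,7\}$, $a_1$ sees $\{1,2,3,4,5,7\}$, $a_2$ sees $\{1,2,3,4,5,6\}$), so each has its current color as its unique legal color and no single recoloring exists; in your own terms, every candidate target for $v$, $a_1$ and $a_2$ is blocked by $N(v')\cup\{v'\}$, $N(b_1)$, $N(b_2)$ respectively, so the ``single degenerate configuration'' you hoped to exclude is realizable. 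Your fallback of reusing $u$'s second available color fails too: that color, $7$, now sits on $a_2\in N^*(u)$, and exchanging colors between two vertices of $S$ leaves the set of colors on $N^*(w)$ unchanged, so $w$ stays blocked.

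The only escape in this configuration is to color $u$ with $7$ at the very first step, i.e., with a color outside $v$'s list of available colors — the choices must be coordinated before coloring, not repaired afterwards, and that coordination is exactly what the paper's proof supplies by a different mechanism. After deleting the same set $S$, the paper truncates the residual lists to $|L(u)|=|L(v)|=2$, $|L(w_1)|=|L(w_2)|=3$, $|L(w)|=4$ and branches: if $L(u)\neq L(v)$, color $u$ with a color in $L(u)\setminus L(v)$ and then finish in the order $w_1,w_2,w,v$ (crucially $v$ is colored last, not second, so among $S$ it only ever loses a color to $w$); if $L(u)=L(v)$, color $w_1$ with a color outside $L(u)$, color $w$ outside $L(u)\cup\{\phi(w_1)\}$, and finish with $w_2,u,v$. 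In the deadlock above, $L(u)=\{4,7\}\neq\{4,5\}=L(v)$, and the paper's rule makes precisely the move $u=7$ that greedy missed. Separately, note that your distance-$3$ claims (e.g., $b_1$ at distance at least $3$ from $a_2$) presuppose that the named vertices are distinct and non-adjacent; the paper dispatches the degenerate coincidence (two adjacent $3$-vertices sharing a common $2$-neighbor) by an easy preliminary argument, and any completed version of your proof would need to do the same.
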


\begin{proof}
Suppose by contradiction that there exists a 3-vertex $u$ with a $2$-neighbor $v$ and a $(1,1,0)$-neighbor $w$. Let the $2$-neighbors of $w$ be $w_1$ and $w_2$.

First, observe that if two adjacent $3$-vertices share a common $2$-neighbor, for example, if $u$ is also adjacent to $w_1$, then we color $G_1-\{u,w,w_1\}$ by minimality of $G_1$ and finish by coloring $u$, $w$, and $w_1$ in this order. This is possible since we have $\Delta+3$ colors and $\Delta\geq 4$. Hence, all named vertices are distinct. 

Now, we color $G_1-\{u,v,w,w_1,w_2\}$ by minimality. Let $L(x)$ be the list of available colors left for a vertex $x\in\{u,v,w,w_1,w_2\}$. Since we have $\Delta+3$ colors and $\Delta\geq 4$, $|L(v)|\geq 2$, $|L(u)|\geq 2$, $|L(w)|\geq 4$, $|L(w_1)|\geq 3$, and $|L(w_2)|\geq 3$. We remove the extra colors so that $|L(x)|$ reaches the lower bound for each $x\in\{u,v,w,w_1,w_2\}$. Consider the two following cases.
\begin{itemize}
\item If $L(u)\neq L(v)$, then we color $u$ with $c\in L(u)\setminus L(v)$. We finish by coloring $w_1$, $w_2$, $w$, and $v$ in this order.
\item If $L(u)=L(v)$, we color $w_1$ with $c\in L(w_1)\setminus L(u)$ (which is possible since $|L(w_1)|=3$ and $|L(u)|=2$). Then, we color $w$ with $d\in L(w)\setminus (L(u)\cup\{c\})$ (which is possible as $|L(w)|=4$). Finally, we finish by coloring $w_2$, $u$, and $v$ in this order.
\end{itemize}
We thus obtain a valid coloring of $G_1$, which is a contradiction.
\end{proof}

\begin{lemma}\label{110-00(-3)0-011}
Graph $G_1$ has no $3$-vertex with two $(1,1,0)$-neighbors and another $3$-neighbor.
\end{lemma}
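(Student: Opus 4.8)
The plan is to show this configuration is reducible, following the template of \Cref{110-001}. Suppose $G_1$ contains such a $3$-vertex $u$, whose three neighbors are the two $(1,1,0)$-vertices $w$ and $w'$ together with a $3$-vertex $y$. Write $w_1,w_2$ (resp. $w_1',w_2'$) for the two $2$-neighbors of $w$ (resp. $w'$), so that $w$ is adjacent to $w_1,w_2,u$ and $w'$ is adjacent to $w_1',w_2',u$, and let $a_1,a_2,a_1',a_2'$ be the far endpoints of the four $1$-paths; by \Cref{2-path1} these endpoints are $3^+$-vertices.

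As in the opening of \Cref{110-001}, I would first dispose of the degenerate cases in which two named vertices coincide or there is an edge beyond those forced by the configuration (for instance $u$ adjacent to some $w_i$, a $2$-neighbor shared by $w$ and $w'$, or some $a_i$ coinciding with a named vertex). In each such case a strictly smaller set can be deleted and recolored greedily in a suitable order using the $\Delta+3\geq 7$ available colors; this is routine. Hence I may assume that $u,w,w',w_1,w_2,w_1',w_2'$ are distinct and induce exactly the edges of the configuration.

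Next I would delete $S=\{u,w,w',w_1,w_2,w_1',w_2'\}$, keeping $y$ and the $a_i$ colored, and $2$-distance-color $G_1-S$ by minimality. Counting colored $2$-distance neighbors yields the residual list sizes $|L(u)|,|L(w)|,|L(w')|\geq \Delta\geq 4$ (here $u$ sees only $y$ and the two other neighbors of $y$, while $w$ sees only $a_1,a_2,y$) and $|L(w_1)|,|L(w_2)|,|L(w_1')|,|L(w_2')|\geq 3$ (each such $2$-vertex sees at most $d(a_i)\leq\Delta$ colored vertices). The problem thus reduces to $L$-coloring the conflict graph $H$ on $S$, in which $u$ is adjacent to all other six vertices, $\{w,w_1,w_2\}$ and $\{w',w_1',w_2'\}$ are triangles, and $ww'$ is a bridge.

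The hard part will be the extremal case $\Delta=4$, where the lists are tight. Vertex $u$ has degree $6$ in $H$ but only $\geq 4$ colors, so it cannot be colored last; I would therefore color $u$ first. What remains is two triangles joined by the bridge $ww'$, with $|L(w)|,|L(w')|\geq 3$ and the four small lists of size $\geq 2$ -- a graph all of whose blocks are cliques, i.e. a Gallai tree, which is \emph{not} degree-choosable in general. The crux is therefore to exploit the remaining freedom (the choice of $u$'s color among its $\geq 4$ options, and the choices on the small-list vertices) to avoid the sole Erd\H{o}s--Rubin--Taylor obstruction, namely lists coinciding around a triangle. Concretely I would color each triangle by first giving its two $2$-vertices distinct colors and then coloring $w$ (resp. $w'$), resolving the coupling across the bridge by a short case analysis on whether the tight lists $L(w),L(w_1'),L(w_2')$ happen to coincide; when they do, re-selecting $u$'s color or swapping the colors on $\{w_1,w_2\}$ breaks the deadlock. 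For $\Delta\geq 5$ every list gains an extra color and the extension follows immediately from degree-choosability, so only the $\Delta=4$ subcase requires this careful bookkeeping, which I expect to be the main obstacle.
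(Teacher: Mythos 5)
Your reduction setup is sound: the deleted set, the residual list sizes, and the structure of the conflict graph $H$ are all computed correctly, and you have put your finger on the right difficulty. But the proposal stops exactly at that difficulty. The step you describe as ``a short case analysis \dots which I expect to be the main obstacle'' \emph{is} the lemma in your approach, and the sketch you offer in its place (color the $2$-vertices of each triangle first, then ``re-select $u$'s color or swap colors'' if lists coincide) is not a proof --- it is not even clear what the exhaustive list of cases would be, nor why the proposed repairs always succeed. So as written there is a genuine gap. It is, however, closable in one line, with no case analysis at all: trim every list to its guaranteed size, so that $|L(u)|=4$ and $|L(w_1)|=3$, and color $u$ with a color $c\in L(u)\setminus L(w_1)$. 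In $H-u$ (two triangles joined by the edge $ww'$, a connected graph) every vertex now has a list at least as large as its degree ($w,w'$ have degree $3$ and lists of size $\geq 3$, the $2$-vertices have degree $2$ and lists of size $\geq 2$), while $w_1$ has strict surplus (list of size $3$, degree $2$). A connected graph in which every list is at least the degree and one vertex has strict surplus is always $L$-colorable: root a spanning tree at $w_1$ and color greedily in non-increasing distance from $w_1$, so every vertex except $w_1$ still has an uncolored neighbor when its turn comes. This argument is uniform in $\Delta\geq 4$, so your $\Delta=4$ versus $\Delta\geq 5$ dichotomy is unnecessary; note also that your justification of the easy case is inaccurate --- for $\Delta\geq 5$ the $2$-vertices' lists do \emph{not} gain an extra color (the far endpoint $a_i$ may have degree $\Delta$, so those lists stay at size $3$), and $H$ itself is never degree-list-ready since $d_H(u)=6>|L(u)|$ when $\Delta\leq 5$; what saves that case is again the surplus at $w$ after coloring $u$.

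For comparison, the paper never builds this $7$-vertex conflict graph. It deletes only five vertices: $u$, both $(1,1,0)$-neighbors, but the $2$-neighbors of just \emph{one} of them (in your notation, $\{u,w,w',w_1',w_2'\}$, keeping $w_1,w_2$ and their far endpoints colored). The resulting bounds are $|L(u)|\geq 2$ --- this is the only place where the hypothesis that the third neighbor $y$ is a $3$-vertex is used --- $|L(w)|\geq 2$, $|L(w')|\geq 4$, $|L(w_1')|,|L(w_2')|\geq 3$, which are exactly the bounds of \Cref{110-001}, and the two-case argument there (split on whether $L(u)=L(w)$) is then invoked verbatim. The paper's choice buys brevity: no new coloring argument is needed. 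Your choice, once repaired as above, buys a cleaner conceptual finish (a surplus argument rather than a list-equality dichotomy). Finally, be aware that the coincidence cases you dismiss as routine are handled explicitly in the paper (it devotes a paragraph, with a specific coloring order, to two $(1,1,0)$-vertices sharing a $2$-neighbor); they do appear to be routine here, but a complete write-up must exhibit those orders rather than assert them.
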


\begin{proof}
Suppose by contradiction that there exists a $3$-vertex $u$ with two $(1,1,0)$-neighbors $v$ and $w$ and another $3$-neighbor $t$. Let $v_1$ and $v_2$ (resp. $w_1$ and $w_2$) be $v$'s (resp. $w$'s) $2$-neighbors.

If $v$ and $w$ share a common $2$-neighbor, say $v_1=w_1$, then we color $G_1-\{u,v,w,v_1,v_2,w_2\}$ by minimality of $G_1$ and finish by coloring $u$, $v$, $w$, $v_2$, $w_2$, and $v_1$ in this order. This is possible since we have $\Delta+3$ colors and $\Delta\geq 4$. Note that this coloring also work when $v_2=w_2$. Hence, all named vertices are distinct.

Now, we color $G_1-\{u,v,w,w_1,w_2\}$ by minimality. Let $L(x)$ be the list of available colors left for a vertex $x\in\{u,v,w,w_1,w_2\}$. Since we have $\Delta+3$ colors and $\Delta\geq 4$, $|L(u)|\geq 2$ (as $d(t)=3$), $|L(v)|\geq 2$, $|L(w)|\geq 4$, $|L(w_1)|\geq 3$, and $|L(w_2)|\geq 3$. Note that we obtain the same lower bounds on the lists of colors as in \Cref{110-001}. Thus, the exact same proof holds and we have a valid coloring of $G_1$, which is a contradiction.
\end{proof}

\subsection{Discharging rules \label{tonton1}}

In this section, we will define a discharging procedure that contradicts the structural properties of $G_1$ (see Lemmas \ref{connected1} to \ref{110-00(-3)0-011}) showing that $G_1$ does not exist. We assign to each vertex $u$ the charge $\mu(u)=3d(u)-8$. By \Cref{equation1}, the total sum of the charges is negative. We then apply the following discharging rules.

\medskip

\begin{itemize}
\item[\ru0] Every $3^+$-vertex gives 1 to each of its $2$-neighbors.
\item[\ru1] Every $4^+$-vertex gives 1 to each of its $3$-neighbors.
\item[\ru2] Every $(0,0,0)$-vertex gives 1 to each of its $(1,1,0)$-neighbors.
\end{itemize}

\subsection{Verifying that charges on each vertex are non-negative} \label{verification}

Let $\mu^*$ be the assigned charges after the discharging procedure. In what follows, we will prove that: $$\forall u \in V(G_1), \mu^*(u)\ge 0.$$

Let $u\in V(G_1)$.

\textbf{Case 1:} If $d(u) = 2$, then $u$ receives charge 1 from each endvertex of the 1-path it lies on by \ru0 (as there are no $2^+$-path by \Cref{2-path1}); Thus we get $\mu^*(u) = \mu(u) + 2\cdot 1 = 3\cdot 2 - 8 + 2 = 0$.

\textbf{Case 2:} If $d(u)=3$, then $\mu(u)=3\cdot 3 - 8 = 1$. Since there are no $2^+$-paths due to \Cref{2-path1} and no $(1,1,1)$-vertices due to \Cref{(111)1}, we have the following cases.
\begin{itemize}
\item If $u$ is a $(1,1,0)$-vertex, then $u$ gives 1 to each of its two $2$-neighbors by \ru0. At the same time, $u$ also receives 1 from its $3^+$-neighbor $v$ by \ru1 or \ru2 as $v$ is either a $4^+$-vertex or a $(0,0,0)$-vertex by \Cref{110-001}. To sum up,
$$ \mu^*(u)\geq 1-2\cdot 1 + 1 = 0.$$
\item If $u$ is a $(1,0,0)$-vertex, then $u$ only gives 1 to its $2$-neighbor by \ru0. Hence,
$$ \mu^*(u)\geq 1-1 = 0.$$
\item If $u$ is a $(0,0,0)$-vertex, then $u$ only gives charge to $(1,1,0)$-vertices by \ru2. Let $t$, $v$, and $w$ be $u$'s $3^+$-neighbors. 

If $u$ is adjacent to a $4^+$-neighbor then it receives 1 by \ru1 and at worst, it gives 1 to each of the two other neighbors by \ru2. As a result,
$$ \mu^*(u)\geq 1+1-2\cdot 1 = 0. $$

If $u$ is adjacent to three $3$-vertices, then at most one of them can be a $(1,1,0)$-vertex due to \Cref{110-00(-3)0-011}. So, $u$ only gives at most 1 to a $(1,1,0)$-neighbor by \ru2. Consequently,
$$ \mu^*(u)\geq 1-1 = 0. $$
\end{itemize}

\textbf{Case 3:} If $4\leq d(u)\leq \Delta$, then, at worst, $u$ gives 1 to each of its neighbors by \ru0 and \ru1. As a result,
$$ \mu^*(u)\geq 3d(u)-8-d(u) \geq 2\cdot 4 - 8 = 0.$$

To conclude, we started with a charge assignment with a negative total sum, but after the discharging procedure, which preserved this sum, we end up with a non-negative one, which is a contradiction. In other words, there exists no counter-example $G_1$ to \Cref{main theorem1}. 

\section{Proof of \Cref{main theorem2}}
\label{sec3}

Since $\mad(G_2)<\frac{14}5$, we have 
\begin{equation}\label{equation2}
\sum_{u\in V(G)}(5d(u)-14) < 0
\end{equation}

We assign to each vertex $u$ the charge $\mu(u)=5d(u)-14$. To prove the non-existence of $G_2$, we will redistribute the charges preserving their sum and obtaining a non-negative total charge, which will contradict \Cref{equation2}. Let us study the structural properties of $G_2$ first.

\subsection{Structural properties of $G_2$\label{tutu2}}

Observe that in Lemmas \ref{connected1} to \ref{(111)1}, we only use the fact that $G_1$ is a minimal counter-example to \Cref{main theorem1} and that we have $\Delta(G_1)+3$ colors, which is at least 7. Consequently, since $G_2$ is a minimal counter-example to \Cref{main theorem2} and we have $\Delta(G_2)+3\geq 9$ colors as $\Delta(G_2)\geq 6$, the same arguments allow us to obtain Lemmas \ref{connected2} to \ref{(111)2}:

\begin{lemma}\label{connected2}
Graph $G_2$ is connected.
\end{lemma}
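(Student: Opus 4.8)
The plan is to reuse, essentially verbatim, the one-line argument already given for $G_1$ in \Cref{connected1}: the only ingredients there are the minimality of the counter-example and the availability of enough colors, and both transfer immediately to $G_2$. So I would argue by contradiction. Suppose $G_2$ is disconnected, and let $H_1,\dots,H_k$ with $k\geq 2$ be its connected components.

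The key observation is that any two vertices lying in distinct components are at distance more than $2$ (indeed, there is no path between them at all), so they impose no constraint on one another in a $2$-distance coloring. Consequently a $2$-distance list coloring of $G_2$ is nothing more than the union of independent $2$-distance list colorings of the $H_i$, each using the lists inherited from the list assignment on $G_2$. Each $H_i$ is a subgraph of $G_2$ with strictly fewer vertices, and since $\mad$ is monotone under taking subgraphs we have $\mad(H_i)\leq\mad(G_2)<\frac{14}{5}$; moreover the inherited lists still have size at least $\Delta(G_2)+3$. By the minimality of $G_2$ (applied exactly as in the rest of \Cref{sec3}, where removing vertices and recoloring the smaller graph is always justified this way), each $H_i$ admits such a coloring. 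Gluing these colorings together produces a $2$-distance list $(\Delta(G_2)+3)$-coloring of $G_2$, contradicting that $G_2$ is a counter-example. Hence $G_2$ must be connected.

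I do not expect any genuine obstacle: the statement is immediate once one records that distinct components are mutually invisible under the distance-$2$ relation, which is precisely why colorings of the components can be combined freely. The only point worth a moment's care is simply confirming that minimality is being invoked on a smaller graph that still satisfies the hypotheses used to color it, namely $\mad<\frac{14}{5}$ together with lists of size $\Delta(G_2)+3$. This is exactly the framing the author already fixed when transferring Lemmas \ref{connected1} to \ref{(111)1} to the setting of $G_2$.
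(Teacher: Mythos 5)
Your proposal is correct and matches the paper's own argument, which for \Cref{connected1} (and hence \Cref{connected2}) is exactly the one-liner that a component of a disconnected counter-example would be a smaller counter-example, since components impose no 2-distance constraints on each other and their colorings can be glued. Your write-up just spells out the details (monotonicity of $\mad$, inherited lists, gluing) that the paper leaves implicit.
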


\begin{lemma}\label{minimumDegree2}
The minimum degree of $G_2$ is at least 2.
\end{lemma}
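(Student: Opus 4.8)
The plan is to mirror the argument used for \Cref{minimumDegree1}, since the only ingredients needed are the minimality of the counter-example and the availability of enough colors, and here we have $\Delta(G_2)+3\geq 9$ colors. First I would rule out vertices of degree $0$: by \Cref{connected2}, $G_2$ is connected, so if it had an isolated vertex it would consist of that single vertex alone, which is trivially $(\Delta(G_2)+3)$-colorable, contradicting that $G_2$ is a counter-example. Hence the minimum degree is at least $1$.

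Next I would suppose, for contradiction, that $G_2$ contains a $1$-vertex $v$, and let $u$ be its unique neighbor. The strategy is the standard deletion-and-extension: remove $v$ and apply minimality to obtain a $2$-distance $(\Delta(G_2)+3)$-coloring of $G_2-\{v\}$, then reinsert $v$ and extend the coloring. The key observation is that $v$ has few $2$-distance neighbors: its only neighbor is $u$, and its distance-$2$ neighbors are the remaining neighbors of $u$, of which there are at most $\Delta(G_2)-1$. Thus $d^*_{G_2}(v)\leq 1+(\Delta(G_2)-1)=\Delta(G_2)$, so $v$ sees at most $\Delta(G_2)$ already-colored vertices while $\Delta(G_2)+3$ colors are available. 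Consequently at least three colors remain free for $v$, and we can color it, producing a valid $2$-distance coloring of $G_2$ and the desired contradiction.

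There is no genuine obstacle in this step: the count $d^*_{G_2}(v)\leq \Delta(G_2)$ leaves a comfortable slack of three colors, so the greedy extension never fails. The only point worth stating explicitly is that this is precisely the same reasoning as in \Cref{minimumDegree1}, now invoked with $G_2$ in place of $G_1$ and with at least $9$ colors rather than at least $7$; since that proof used nothing beyond minimality and the color bound, it transfers verbatim.
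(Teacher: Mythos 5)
Your proof is correct and is essentially identical to the paper's argument: the paper proves \Cref{minimumDegree2} by noting that the proof of \Cref{minimumDegree1} (connectedness rules out isolated vertices, and a $1$-vertex can be deleted, the rest colored by minimality, and then recolored greedily since it sees at most $\Delta$ constraints with $\Delta+3$ colors available) transfers verbatim to $G_2$. Your explicit count $d^*_{G_2}(v)\leq \Delta(G_2)$ is exactly the "at most $\Delta$ constraints" observation in the paper.
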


\begin{lemma}\label{2-path2}
Graph $G_2$ has no $2^+$-paths.
\end{lemma}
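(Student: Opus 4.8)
The final statement is Lemma~\ref{2-path2}, which asserts that $G_2$ has no $2^+$-paths. The plan is to mirror the proof of Lemma~\ref{2-path1} exactly, since the excerpt explicitly tells us that Lemmas~\ref{connected1} through~\ref{(111)1} only use minimality of the counter-example together with the fact that at least $7$ colors are available, and that $G_2$ is a minimal counter-example with $\Delta(G_2)+3\geq 9$ colors. So the argument is a direct transcription with $G_1$ replaced by $G_2$ and $\Delta(G_1)$ replaced by $\Delta(G_2)$.

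Concretely, I would argue by contradiction. Suppose $G_2$ contains a $2^+$-path, i.e.\ a path $uvwx$ in which $v$ and $w$ are consecutive $2$-vertices. By minimality of $G_2$, the graph $G_2-\{v,w\}$ admits a $2$-distance list $(\Delta+3)$-coloring. The key observation is a counting of constraints on $v$ and $w$: each of these two $2$-vertices sees at most $\Delta+1$ already-colored vertices at distance at most $2$ in $G_2$. Indeed, a $2$-vertex has two neighbors; in the worst case one neighbor is a high-degree vertex contributing up to $\Delta$ distinct $2$-distance neighbors through it, plus the other side of the path, giving the bound $\Delta+1$. Since there are $\Delta+3$ colors available, each of $v$ and $w$ retains at least $(\Delta+3)-(\Delta+1)=2$ admissible colors.

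With at least two available colors at each of $v$ and $w$, extending the coloring is immediate: color $v$ first with any of its admissible colors, which forbids at most one additional color for $w$ (if $v$ and $w$ see each other), still leaving at least one admissible color for $w$. This completes a valid $2$-distance list $(\Delta+3)$-coloring of $G_2$, contradicting the assumption that $G_2$ is a counter-example. Hence $G_2$ has no $2^+$-path.

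I do not expect any genuine obstacle here, precisely because the statement is a restatement of Lemma~\ref{2-path1} under the weaker hypothesis $\Delta\geq 6$ (which only makes more colors available), so the same bound of two leftover colors per $2$-vertex holds a fortiori. The only point requiring a moment's care is the constraint count showing each $2$-vertex sees at most $\Delta+1$ colored vertices; this is where the $\Delta\geq 6$ hypothesis is implicitly used to guarantee $\Delta+3\geq 9\geq 3$, ensuring the subtraction leaves a positive surplus. Since this surplus ($\geq 2$) is the same as in the $\mathrm{mad}<\frac83$ case, I would simply note that the proof of Lemma~\ref{2-path1} carries over verbatim rather than reproduce the routine details.
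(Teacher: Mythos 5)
Your proposal is correct and follows exactly the paper's approach: the paper proves this lemma by noting that the argument for Lemma~\ref{2-path1} (color $G_2-\{v,w\}$ by minimality, observe each of $v,w$ sees at most $\Delta+1$ colored vertices, hence retains at least two admissible colors, then extend) carries over verbatim, which is precisely what you do. One small remark: the hypothesis $\Delta\geq 6$ is not actually needed here even implicitly, since the surplus $(\Delta+3)-(\Delta+1)=2$ holds for every $\Delta$; the degree hypotheses only matter in later lemmas.
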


\begin{lemma}\label{(111)2}
Graph $G_2$ has no $(1,1,1)$-vertex.
\end{lemma}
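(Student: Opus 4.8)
The plan is to re-use, essentially verbatim, the argument already given for \Cref{(111)1}. As the author observes just before the statement, that proof uses nothing specific to $G_1$ beyond its minimality as a counter-example and the fact that at least $7$ colors are available. Since $G_2$ is a minimal counter-example to \Cref{main theorem2} and has $\Delta(G_2)+3\geq 9$ colors at its disposal, the hypotheses required for the deletion-and-extension argument are satisfied a fortiori, so the same reasoning transfers directly.

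Concretely, I would suppose for contradiction that $G_2$ contains a $(1,1,1)$-vertex $u$, that is, a $3$-vertex whose three neighbors $u_1,u_2,u_3$ are all $2$-vertices. By minimality of $G_2$, the graph $G_2-\{u,u_1,u_2,u_3\}$ admits a $2$-distance $(\Delta+3)$-coloring, which I would then extend to the four deleted vertices greedily, coloring them in the order $u_1$, $u_2$, $u_3$, $u$.

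The only point to verify is that a color survives at each step. When $u_i$ is colored, its already-colored $2$-distance neighbors are its second endpoint together with the at most $\Delta-1$ other neighbors of that endpoint, plus whichever of $u_1,u_2,u_3$ were colored before it; this forbids at most $\Delta+2$ colors and hence leaves at least one of the $\Delta+3$ available. The vertex $u$ is colored last, and it sees only $u_1,u_2,u_3$ and their three second endpoints, i.e. at most $6$ vertices, so a color remains because $\Delta+3\geq 9>6$. There is no genuine obstacle here: the constraint counts are identical to those in \Cref{(111)1}, and the larger palette forced by $\Delta\geq 6$ only makes the extension easier, yielding a valid coloring of $G_2$ and the desired contradiction.
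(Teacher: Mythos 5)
Your proposal is correct and follows exactly the paper's approach: the paper proves \Cref{(111)2} by noting that the proof of \Cref{(111)1} (delete $u,u_1,u_2,u_3$, color by minimality, extend greedily in the order $u_1,u_2,u_3,u$) only uses minimality and the availability of at least $7$ colors, hence transfers verbatim to $G_2$. Your constraint counts (at most $\Delta+2$ forbidden colors for each $u_i$ and at most $6$ for $u$) correctly fill in the details the paper leaves implicit.
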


\begin{lemma}\label{011-5}
A $(1,1,0)$-vertex can only share a common $2$-neighbor with a $\Delta$-vertex.
\end{lemma}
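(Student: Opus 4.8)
The plan is to argue by contradiction using the minimality of $G_2$. Suppose $v$ is a $(1,1,0)$-vertex that shares a common $2$-neighbor $v_1$ with a vertex $u$ of degree at most $\Delta-1$; I will then produce a valid coloring of $G_2$, contradicting that $G_2$ is a counterexample. Here $v_1$ is the shared $2$-vertex, so its two neighbors are exactly $v$ and $u$. Let $v_2$ be the second $2$-neighbor of $v$ (the middle of $v$'s other $1$-path), with far endpoint $s$, and let $t$ be the $3^+$-neighbor of $v$ reached along its $0$-path. Since $G_2$ has no $2^+$-path by \Cref{2-path2}, the endpoints $s$ and $u$ are $3^+$-vertices.

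First I would delete the set $\{v,v_1,v_2\}$ and color $G_2-\{v,v_1,v_2\}$ by minimality, then bound the number of colors still available for each deleted vertex. The key computation is for $v_1$: once $v$ and $v_2$ are removed, the only colored vertices that $v_1$ sees are $u$, the at most $d(u)-1$ remaining neighbors of $u$, and $t$ (seen through the deleted vertex $v$); hence $v_1$ sees at most $d(u)+1\le\Delta$ colored vertices and keeps a list $L(v_1)$ with $|L(v_1)|\ge 3$. This is precisely where the hypothesis $d(u)\le\Delta-1$ enters, and it is exactly the inequality that degrades when $u$ is a $\Delta$-vertex. The same bookkeeping gives $|L(v_2)|\ge 2$ (it sees $s$, the neighbors of $s$, and $t$, for at most $d(s)+1\le\Delta+1$ colored vertices) and $|L(v)|\ge 1$ (it sees $t$, the neighbors of $t$, and the two extra vertices $u,s$, for at most $d(t)+2\le\Delta+2$ colored vertices).

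To finish, I would observe that $v$, $v_1$, $v_2$ are pairwise at distance at most $2$ ($v$ is adjacent to both $v_1$ and $v_2$, and $v_1,v_2$ are joined through $v$), so they must receive three distinct colors. With list sizes $1$, $3$, $2$ this is always achievable greedily in the order $v, v_2, v_1$: the vertex $v$ takes its one available color, $v_2$ then has at least $2-1=1$ choice, and $v_1$ at least $3-2=1$ choice. Extending the coloring in this way colors all of $G_2$, the desired contradiction.

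The only delicate point is the degenerate cases where the named vertices are not all distinct (for instance $u=t$, $s=t$, or the two $1$-paths sharing an endpoint $u=s$), and I expect this to be the main, if minor, obstacle. However, any such coincidence only merges vertices and therefore can only \emph{decrease} each of the $2$-distance neighborhood counts above, so the inequalities $|L(v_1)|\ge 3$, $|L(v_2)|\ge 2$, $|L(v)|\ge 1$ all persist; moreover one always has $v_1\neq v_2$, so the requirement of three distinct colors is unaffected. Alternatively, exactly as in \Cref{110-001,110-00(-3)0-011}, each coincidence can be handled directly by deleting the few vertices involved and coloring them greedily, since the $\Delta+3\ge 9$ available colors comfortably exceed the number of constraints.
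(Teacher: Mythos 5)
Your proposal is correct and follows essentially the same approach as the paper: delete the $(1,1,0)$-vertex together with its two $2$-neighbors, color the rest by minimality, and extend greedily in exactly the same order (the $(1,1,0)$-vertex first, then the far $2$-neighbor, then the shared $2$-neighbor last, where the hypothesis $d(u)\leq\Delta-1$ guarantees the final vertex has a spare color). Your explicit list-size bookkeeping and the remark that vertex coincidences only shrink the relevant $2$-distance neighborhoods are details the paper leaves implicit, but the underlying argument is identical.
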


\begin{proof}
Suppose by contradiction that there exists a $(1,1,0)$-vertex $u$, with two $2$-neighbors $v$ and $w$, and let $x$ be the other endvertex of the $1$-path $uwx$ with $d(x)\leq \Delta-1$. We color $G_2-\{u,v,w\}$ by minimality of $G_2$. Then, it suffices to finish coloring $u$, $v$, and $w$ in this order as we have $\Delta+3$ colors. 
\end{proof}

\begin{lemma}\label{110-4}
A $(1,1,0)$-vertex has a $(\Delta-1)^+$-neighbor.
\end{lemma}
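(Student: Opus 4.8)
The plan is to argue by contradiction from the minimality of $G_2$, exactly in the style of the preceding lemmas. Write $u$ for the $(1,1,0)$-vertex, let $v$ and $w$ be its two $2$-neighbors (the internal vertices of its two $1$-paths), and let $c$ be the third neighbor of $u$, reached along the $0$-path; by the definition of a $k$-path, $c$ is a $3^+$-vertex. Since $\Delta\geq 6$ gives $\Delta-1\geq 5>2$, neither $v$ nor $w$ can be the required $(\Delta-1)^+$-neighbor, so the only candidate is $c$. Hence it suffices to prove $d(c)\geq\Delta-1$, and I would suppose for contradiction that $d(c)\leq\Delta-2$.

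First I would delete $u$ alone and $2$-distance color $G_2-u$ by minimality. It then remains only to extend the coloring to $u$, i.e. to find a color in its list avoiding every color used on $N^*_{G_2}(u)$. I would bound $d^*_{G_2}(u)$ directly: the three neighbors of $u$ are $v,w,c$; through $v$ the vertex $u$ sees only $v$'s other neighbor, through $w$ only $w$'s other neighbor, and through $c$ at most $d(c)-1$ further vertices. This yields
\[
d^*_{G_2}(u)\leq 3+1+1+(d(c)-1)=d(c)+4\leq(\Delta-2)+4=\Delta+2,
\]
where any coincidence among these second neighbors only decreases the count. Since $u$ has a list of $\Delta+3$ colors and sees at most $\Delta+2$ of them, at least one color remains free for $u$, extending the coloring to all of $G_2$ and contradicting that $G_2$ is a counterexample.

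I do not expect a genuine obstacle here: the structure is pinned down as soon as $c$ is identified, and distinctness of the named vertices is automatic because $u$ is a $3$-vertex while $v,w$ are $2$-vertices. The one point that needs care is the count of $N^*_{G_2}(u)$, and this is precisely where the hypothesis $d(c)\leq\Delta-2$ is used to push the number of forbidden colors below $\Delta+3$; so that inequality is really the crux rather than a difficulty. As a slightly more conservative route mirroring \Cref{110-001} and \Cref{011-5}, I could instead delete $\{u,v,w\}$, record the resulting list sizes $|L(u)|\geq 3$ (using $d(c)\leq\Delta-2$), $|L(v)|\geq 2$, and $|L(w)|\geq 2$, and then color $v$, $w$, $u$ in this order: this succeeds because $u,v,w$ are pairwise at distance at most $2$ and their lists have sizes $2,2,3$, so $v$ has a choice, $w$ avoids $v$, and $u$ avoids both.
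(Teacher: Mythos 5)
Your closing ``conservative route'' is correct and is in fact exactly the paper's proof: color $G_2-\{u,v,w\}$ by minimality and then color $v$, $w$, $u$ in this order. However, your primary argument --- delete $u$ alone, color $G_2-u$ by minimality, and extend to $u$ --- has a genuine gap, and it is not in the counting of $d^*_{G_2}(u)$ (that count is fine). The flaw is the sentence ``it then remains only to extend the coloring to $u$.'' Deleting a vertex changes the 2-distance relation: in $G_2$ the three neighbors $v$, $w$, $c$ of $u$ are pairwise at distance at most $2$ \emph{through $u$}, so any 2-distance coloring of $G_2$ must give them three distinct colors. But in $G_2-u$ these vertices need not be within distance $2$ of one another: $v$ and $w$ are 2-vertices whose only remaining neighbors are the endvertices $x$ and $y$ of the two 1-paths, and nothing forces $x$ and $y$ to coincide or be adjacent. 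Hence the coloring of $G_2-u$ produced by minimality may perfectly well assign $v$ and $w$ (or $v$ and $c$, or $w$ and $c$) the same color, and no choice of color for $u$ can repair that conflict. This is exactly why the paper's proofs that delete few vertices take extra precautions; for instance, in \Cref{3-010-3} the graph $G_2-\{v\}$ is colored and then $u$ is deliberately \emph{uncolored} and recolored, precisely to eliminate the potential conflict between the two neighbors of the deleted 2-vertex $v$.

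The repair is what you already wrote in your last paragraph: delete $\{u,v,w\}$, so that every pair of vertices that see each other only through a deleted vertex itself contains a deleted vertex, and then recolor greedily. Your list bounds $|L(u)|\geq 3$ (this is the one place where $d(c)\leq\Delta-2$ is used), $|L(v)|\geq 2$, $|L(w)|\geq 2$, and the coloring order $v$, $w$, $u$ are all correct, and they match the paper's argument. So promote that paragraph to be the whole proof and discard the one-vertex-deletion argument.
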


\begin{proof}
Suppose by contradiction that there exists a $(1,1,0)$-vertex $u$ with two $2$-neighbors $v$ and $w$ and a $(\Delta-2)^-$-neighbor $t$. We color $G_2-\{u,v,w\}$ by minimality of $G_2$. Then, it suffices to finish coloring $v$, $w$, and $u$ in this order as we have $\Delta+3$ colors.
\end{proof}

\begin{lemma}\label{3-010-3}
A $(1,0,0)$-vertex cannot have two $3$-neighbors.
\end{lemma}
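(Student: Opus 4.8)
The plan is to argue by contradiction in the style of the previous lemmas: assume a $(1,0,0)$-vertex $u$ with two $3$-neighbors exists, delete a small set of vertices around $u$, color the rest by minimality of $G_2$, and then show the coloring always extends, contradicting that $G_2$ is a counter-example.

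First I would fix notation. Let $v$ be the $2$-vertex lying on the $1$-path at $u$, and let $y$ be the other endvertex of that $1$-path, so that $v$ has exactly the two neighbors $u$ and $y$. Since $u$ is a $(1,0,0)$-vertex, its two remaining neighbors are joined to it by $0$-paths, i.e.\ they are simply adjacent to $u$; call them $w$ and $x$, and by hypothesis these are the two $3$-neighbors.

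The naive attempt---delete only $u$ and recolor---does not quite work, and recognizing this is the main subtlety. In $G_2$ the vertex $v$ lies at distance $2$ from both $w$ and $x$ (through $u$), so $v$ must avoid their colors; but in $G_2-\{u\}$ that constraint disappears, so a minimal coloring of $G_2-\{u\}$ could legitimately give $v$ the same color as $w$ or $x$, and then no recoloring of $u$ alone repairs this. The remedy is to delete both $u$ and $v$, color $G_2-\{u,v\}$ by minimality, and re-extend to $v$ and $u$ in that order, which restores the freedom to recolor $v$ correctly.

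The extension is then a short counting argument. I color $v$ first: its $2$-distance neighborhood is contained in $\{u,y,w,x\}$ together with the at most $\Delta-1$ further neighbors of $y$, and since $u$ is still uncolored, $v$ sees at most $\Delta+2$ colors, leaving at least one available. I then color $u$: its $2$-distance neighborhood is contained in $\{v,w,x,y\}$ together with the two other neighbors of $w$ and the two other neighbors of $x$, hence at most $8$ vertices; any coincidence among these only lowers the count. Since $\Delta\geq 6$ gives $\Delta+3\geq 9>8$ colors, $u$ can be colored. This last inequality is exactly where the hypothesis $\Delta\geq 6$ is needed, and it is the quantitative heart of the argument; the extended coloring of $G_2$ then contradicts minimality.
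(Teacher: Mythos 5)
Your extension counts are right, but the reduction itself has a genuine gap: you delete $u$ before invoking minimality. The two $3$-neighbors $w$ and $x$ of $u$ are at distance at most $2$ in $G_2$ (through $u$), so any $2$-distance coloring of $G_2$ must give them distinct colors. In $G_2-\{u,v\}$, however, $w$ and $x$ may be at distance greater than $2$, so the coloring obtained by minimality is free to assign them the same color. When that happens, no choice of colors for $v$ and $u$ can repair the situation --- the conflict is between two vertices you never recolor --- so your partial coloring does not extend to a $2$-distance coloring of $G_2$ and the contradiction never materializes. Recoloring $w$ or $x$ is not an option either: a $3$-vertex can see up to $3\Delta$ vertices at distance at most two, far more than $\Delta+3$. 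Note that the same objection applies to the ``naive attempt'' of deleting $u$ alone; your diagnosis of that attempt identified only the loss of $v$'s constraints through $u$, not the loss of the constraint between $w$ and $x$.

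The paper's proof keeps $u$ in the graph precisely to avoid this: it colors $G_2-\{v\}$ by minimality (deleting $v$ loses only the constraint between $u$ and $y$, and both of these vertices will end up (re)colored), then \emph{uncolors} $u$, and finally colors $v$ and $u$ in this order. Since $u$ was present when minimality was applied, $w$ and $x$ received distinct colors, and every $2$-distance constraint of $G_2$ among the vertices that remain colored is preserved. After this change, your counting goes through verbatim: $v$ sees at most $\Delta+2$ colors ($u$ being uncolored), and $u$ sees at most $8<\Delta+3$ colors since $\Delta\geq 6$. So the fix is not in the arithmetic but in the choice of what to delete: remove only the $2$-vertex, and uncolor, rather than delete, the $(1,0,0)$-vertex.
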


\begin{proof}
Suppose by contradiction that there exists a $(1,0,0)$-vertex $u$ with a $2$-neighbor $v$ and two $3$-neighbors. We color $G_2-\{v\}$ by minimality of $G_2$. We uncolor $u$ then it suffices to finish coloring $v$ and $u$ in this order as we have $\Delta+3$ colors and $\Delta\geq 6$.
\end{proof}

\begin{lemma}\label{4-01(-5)0-3}
A $(1,0,0)$-vertex with a $3$-neighbor and a $4$-neighbor can only share a common $2$-neighbor with a $\Delta$-vertex.
\end{lemma}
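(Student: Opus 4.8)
The plan is a standard minimality reduction. Write the $1$-path through $u$ as $u-v-x$, so $v$ is the shared $2$-neighbor, and argue by contradiction assuming $d(x)\le\Delta-1$; let $a$ be the $3$-neighbor and $b$ the $4$-neighbor of $u$. As in \Cref{011-5,3-010-3}, I would first clear away the degenerate cases where some of these vertices coincide --- for instance $x\in\{a,b\}$, or a second-neighbor of $a$ or $b$ equal to $x$ --- since each such identification merges two constraints into one and can only enlarge the lists computed below; so I assume all named vertices distinct.

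Next I delete $\{u,v\}$ and color $G_2-\{u,v\}$ by minimality: the graph is smaller, still satisfies $\mad<\tfrac{14}{5}$, and keeps maximum degree $\Delta$ (neither a deleted vertex nor a neighbor of one has degree $\Delta$, since $d(a)=3$, $d(b)=4$ and $d(x)\le\Delta-1$), so lists of size $\ge\Delta+3$ stay adequate. The whole point is to extend in the order $u$ first, $v$ second. When $u$ is colored it is forbidden only the colors of $a,b$ (its neighbors), of $x$ (at distance two through the removed $v$), and of the at most $d(a)-1=2$ and $d(b)-1=3$ second-neighbors reached through $a$ and $b$ --- at most $2+1+2+3=8$ colors in all --- so $|L(u)|\ge(\Delta+3)-8=\Delta-5\ge1$ and $u$ can be colored.

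It remains to color $v$, now adjacent to the freshly colored $u$ and to $x$. Beyond $\phi(u)$ and $\phi(x)$, vertex $v$ sees only $a,b$ (through $u$) and the at most $d(x)-1$ other neighbors of $x$, that is at most $1+1+2+(d(x)-1)=d(x)+3\le\Delta+2$ colors; hence $|L(v)|\ge1$ and $v$ can be colored too. This yields a $2$-distance $(\Delta+3)$-coloring of $G_2$, a contradiction, so in fact $d(x)=\Delta$.

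The main obstacle --- and the reason the statement names a $3$-neighbor and a $4$-neighbor rather than arbitrary ones --- is that $|L(u)|\ge1$ is tight exactly at $\Delta=6$. There the count $8$ must not be exceeded, forcing $d(a)+d(b)\le7$; a single $5^+$-neighbor would push the count to $9$ and empty $L(u)$. The order of extension is equally critical: coloring $v$ before $u$ would forbid one more color ($\phi(v)$, along the edge $uv$) to $u$, dropping its guarantee to $\Delta-6=0$. So the argument succeeds only by handling the tightly constrained vertex $u$ first, the slack for $v$ being supplied separately by the assumption $d(x)\le\Delta-1$.
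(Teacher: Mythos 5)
Your extension order and your list counts are exactly right, but the reduction step has a genuine gap: you delete both $u$ and $v$ and invoke minimality on $G_2-\{u,v\}$. A $2$-distance coloring of $G_2-\{u,v\}$ need not be a valid partial $2$-distance coloring of $G_2$: the neighbors $a$ and $b$ of $u$ are at distance $2$ in $G_2$ (through $u$), but once $u$ is deleted they may be at distance $3$ or more in $G_2-\{u,v\}$, in which case the coloring supplied by minimality is free to give them the \emph{same} color. No choice of colors for $u$ and $v$ can repair that conflict, so your extension can end in a coloring of $G_2$ that is not a $2$-distance coloring. Your preliminary discussion of coincidences does not cover this, because the problem is not that named vertices coincide but that a constraint between two vertices which both \emph{remain} in the graph is erased by removing $u$. (The pair $u,x$ at distance $2$ through $v$ causes no such trouble, since the deleted vertex is itself an endpoint of that pair and you correctly charge $x$ against $u$'s list.)

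The paper's proof dodges exactly this issue with a trick you should adopt: color $G_2-\{v\}$ by minimality --- keeping $u$ in the graph, so that $a$ and $b$, being at distance $2$ through $u$ there, necessarily receive distinct colors --- and then \emph{uncolor} $u$. The resulting partial coloring is valid in $G_2$ (the only distance-two pair of $G_2$ not realized in $G_2-\{v\}$ is $\{u,x\}$, and $u$ is now uncolored), after which your computation applies verbatim: $u$ sees at most $2+1+2+3=8<\Delta+3$ colored vertices, and $v$ then sees at most $d(x)+3\leq\Delta+2$ colors. So the fix changes only the object to which minimality is applied; your extension order, and your observation about why the hypotheses force $d(a)+d(b)\leq 7$, survive unchanged.
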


\begin{proof}
Suppose by contradiction that there exists a $(1,0,0)$-vertex $u$, with a $2$-neighbor $v$, a $3$-neighbor, and a $4$-neighbor, and let $x$ be the other endvertex of the $1$-path $uvx$ with $d(x)\leq \Delta-1$. We color $G_2-\{v\}$ by minimality of $G_2$. We uncolor $u$ then it suffices to finish coloring $u$ and $v$ in this order as we have $\Delta+3$ colors and $\Delta\geq 6$.
\end{proof}

\begin{lemma}\label{1111-5}
A $(1,1,1,1)$-vertex can only share a common $2$-neighbor with a $\Delta$-vertex.
\end{lemma}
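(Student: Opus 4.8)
The plan is to argue by contradiction from the minimality of $G_2$, following the same delete-and-recolour scheme as \Cref{011-5}. Let $u$ be a $(1,1,1,1)$-vertex with $2$-neighbours $v_1,v_2,v_3,v_4$, and for each $i$ let $x_i$ denote the other endvertex of the $1$-path $uv_ix_i$. Suppose for contradiction that $u$ shares the $2$-neighbour $v_1$ with a vertex $x_1$ having $d(x_1)\leq\Delta-1$. Since $u$ has degree $4$ in a simple graph, the vertices $u,v_1,v_2,v_3,v_4$ are distinct; moreover they are pairwise at distance at most $2$ (each $v_i$ is adjacent to $u$, and any two $v_i,v_j$ share the neighbour $u$), so in every valid coloring they must receive five pairwise distinct colours.

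First I would delete $\{u,v_1,v_2,v_3,v_4\}$ and colour the rest of the graph by minimality; it then remains to extend this coloring to the five deleted vertices. For each such vertex $z$ let $L(z)$ be the set of the $\Delta+3$ colours not already forbidden by a coloured vertex at distance at most $2$ from $z$. The key computation is to bound these lists. The only coloured vertices $u$ sees are $x_1,x_2,x_3,x_4$, so $|L(u)|\geq(\Delta+3)-4=\Delta-1\geq 5$. A $2$-vertex $v_i$ sees, among coloured vertices, only $x_i$ and the other neighbours of $x_i$, hence at most $d(x_i)$ of them, giving $|L(v_i)|\geq(\Delta+3)-d(x_i)$; this yields $|L(v_1)|\geq 4$ from $d(x_1)\leq\Delta-1$, and $|L(v_i)|\geq 3$ for $i\in\{2,3,4\}$ from $d(x_i)\leq\Delta$.

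With these bounds I would colour the five vertices greedily in order of increasing list size, say $v_2,v_3,v_4,v_1,u$. As all five pairwise see one another, each vertex reached only needs to avoid the colours already used on the previously coloured ones among the five. Concretely $v_2$ has at least $3$ choices, $v_3$ at least $3-1=2$, $v_4$ at least $3-2=1$, $v_1$ at least $4-3=1$, and finally $u$ at least $5-4=1$; every step therefore succeeds, producing a valid coloring of $G_2$ and contradicting its choice as a minimal counterexample.

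Since the argument is purely a counting one, the point to get right is the colouring order: the three tightest vertices $v_2,v_3,v_4$ (lists of size $3$) must be coloured before $v_1$ and $u$, so that the accumulating forbidden colours are absorbed by the larger lists, and in particular by $|L(u)|\geq\Delta-1$, which easily survives four forbidden colours. The only thing to double-check is the effect of coincidences among the $x_i$ (for instance $x_i=x_j$, or $x_i$ adjacent to some $v_j$); but any such coincidence merely reduces the number of distinct coloured vertices seen by a given $v_i$ or by $u$, so it can only enlarge the lists above and the greedy extension goes through unchanged.
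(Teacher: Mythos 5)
Your proof is correct and follows essentially the same approach as the paper: delete $\{u,v_1,v_2,v_3,v_4\}$, color the rest by minimality, and greedily extend in an order that saves the shared $2$-neighbor of the $(\Delta-1)^-$-vertex and then $u$ for last. The paper leaves the list-size counting implicit ("it suffices to finish coloring $v_1,v_2,v_3,v_4,u$ in this order"); your write-up simply makes those bounds and the robustness to coincidences among the $x_i$ explicit.
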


\begin{proof}
Suppose by contradiction that there exists a $(1,1,1,1)$-vertex $u$, with four $2$-neighbors $v_1$, $v_2$, $v_3$, and $v_4$, and let $x$ be the other endvertex of the $1$-path $uv_4x$ with $d(x)\leq \Delta-1$. We color $G_2-\{u,v_1,v_2,v_3,v_4\}$ by minimality of $G_2$. Then, it suffices to finish coloring $v_1$, $v_2$, $v_3$, $v_4$, and $u$ in this order as we have $\Delta+3$ colors and $\Delta\geq 6$.
\end{proof}

\begin{lemma}\label{4-1110-3}
A $(1,1,1,0)$-vertex with a $3$-neighbor can only share a common $2$-neighbor with a $(\Delta-1)^+$-vertex.
\end{lemma}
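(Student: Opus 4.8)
The plan is to argue by contradiction inside the minimal-counterexample framework already set up for $G_2$. Suppose $G_2$ contains a $(1,1,1,0)$-vertex $u$ whose $0$-path leads to a $3$-neighbor $t$, whose three $1$-paths carry the $2$-vertices $v_1,v_2,v_3$, and suppose, contrary to the conclusion, that one of them, say $v_1$, has its other endvertex $x_1$ with $d(x_1)\leq\Delta-2$. Exactly as in the opening paragraphs of \Cref{011-5} and \Cref{1111-5}, I would first dispose of the degenerate configurations in which some of the named vertices coincide (for instance $x_i=x_j$, or $x_i=t$): each such coincidence only lowers the number of distinct colored vertices seen by the vertices I intend to recolor, so the list lower bounds obtained below can only improve and the same extension still works. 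Hence I may assume all named vertices are distinct.

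Next I would delete $S=\{u,v_1,v_2,v_3\}$ and color $G_2-S$ by minimality, leaving $t$ colored. For $x\in S$ let $L(x)$ be the set of colors not yet used on the colored vertices that $x$ sees. The core of the argument is counting these forbidden colors. Because $t$ is a $3$-vertex, the only colored vertices $u$ sees are $t$, the three endvertices $x_1,x_2,x_3$, and the at most two remaining neighbors of $t$, so $u$ is blocked by at most $6$ colors and $|L(u)|\geq(\Delta+3)-6=\Delta-3\geq3$ since $\Delta\geq6$. The colored vertices $v_1$ sees are $x_1$, the at most $d(x_1)-1$ other neighbors of $x_1$, and $t$; as $d(x_1)\leq\Delta-2$ this yields $|L(v_1)|\geq(\Delta+3)-(d(x_1)+1)\geq4$. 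For the remaining two vertices I only claim $|L(v_2)|,|L(v_3)|\geq2$, each being blocked by its far endvertex, that endvertex's neighborhood, and $t$.

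Finally I would note that $u,v_1,v_2,v_3$ pairwise see one another — the $v_i$ lie at distance $2$ from each other through $u$ — so they form a clique in the square of $G_2$ and must receive four pairwise distinct colors. The profile of list sizes $2,2,3,4$ is precisely what a greedy argument requires: coloring in the order $v_2,v_3,u,v_1$, the $i$-th vertex sees at most $i-1$ previously colored members of the clique while its list has size at least $i$, so a free color always remains. This extends the coloring to all of $G_2$, the desired contradiction.

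I expect the difficulty here to be bookkeeping rather than depth. Each hypothesis is used in exactly one spot: the $3$-neighbor $t$ together with $\Delta\geq6$ is what guarantees $|L(u)|\geq3$, and the assumed endvertex $x_1$ of degree at most $\Delta-2$ is what guarantees $|L(v_1)|\geq4$; both are indispensable, since weakening either drops the sorted list profile below $1,2,3,4$ and kills the greedy step. The one point demanding care is confirming that the coincidence cases genuinely only help, so that the reduction to distinct named vertices is legitimate.
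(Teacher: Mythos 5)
Your proof is correct and is essentially the paper's own argument: the paper also deletes $\{u,v_1,v_2,v_3\}$, colors the rest by minimality, and finishes greedily with the two generic $2$-neighbors first, then $u$, then the $2$-neighbor whose far endvertex has degree at most $\Delta-2$ --- exactly your order $v_2,v_3,u,v_1$ up to renaming, with your explicit list-size profile $2,2,3,4$ being the bookkeeping the paper leaves implicit. (Your only slip is cosmetic: the coincidence-handling paragraphs you cite appear in the Section~\ref{sec2} lemmas, not in \Cref{011-5} or \Cref{1111-5}, though your resolution of that point is sound.)
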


\begin{proof}
Suppose by contradiction that there exists a $(1,1,1,0)$-vertex $u$, with three $2$-neighbors $v_1$, $v_2$, and $v_3$ and a $3$-neighbor, and let $x$ be the other endvertex of the $1$-path $uv_3x$ with $d(x)\leq \Delta-2$. We color $G_2-\{u,v_1,v_2,v_3\}$ by minimality of $G_2$. Then, it suffices to finish coloring $v_1$, $v_2$, $u$, and $v_3$ in this order as we have $\Delta+3$ colors and $\Delta\geq 6$.
\end{proof}

\subsection{Discharging rules \label{tonton1}}

In this section, we will define a discharging procedure that contradicts the structural properties of $G_2$ (see Lemmas \ref{connected2} to \ref{4-1110-3}) showing that $G_2$ does not exist. We assign to each vertex $u$ the charge $\mu(u)=5d(u)-14$. By \Cref{equation2}, the total sum of the charges is negative. We then apply the following discharging rules:

\medskip

\begin{itemize}
\item[\ru0] Every $3^+$-vertex gives 2 to each of its $2$-neighbors.
\item[\ru1] 
\begin{itemize}
\item[(i)] Every $4$-vertex gives $\frac12$ to each of its $3$-neighbors.
\item[(ii)] Every $5^+$-vertex gives 2 to each of its $3$-neighbors.
\end{itemize}
\item[\ru2] Let $uvw$ be a $1$-path:
\begin{itemize}
\item[(i)] If $d(u)=5$ and $d(w)\leq 4$, then $u$ gives $\frac15$ to $w$.
\item[(ii)] If $d(u)\geq 6$ and $d(w)\leq 4$, then $u$ gives $\frac23$ to $w$.
\end{itemize}
\end{itemize}

\subsection{Verifying that charges on each vertex are non-negative} \label{verification}

Let $\mu^*$ be the assigned charges after the discharging procedure. In what follows, we will prove that: $$\forall u \in V(G_2), \mu^*(u)\ge 0.$$

Let $u\in V(G_2)$.

\textbf{Case 1:} If $d(u) = 2$, then $u$ receives charge 2 from each endvertex of the path it lies on by \ru0 (as there are no $2^+$-path by \Cref{2-path2}); Thus we get $\mu^*(u) = \mu(u) + 2\cdot 2 = 5\cdot 2 - 14 + 2\cdot 2 = 0$.

\textbf{Case 2:} If $d(u)=3$, then recall $\mu(u) = 5\cdot 3 - 14 = 1$. Since there are no $2^+$-paths due to \Cref{2-path2} and no $(1,1,1)$-vertices due to \Cref{(111)2}, we have the following cases.
\begin{itemize}
\item If $u$ is a $(1,1,0)$-vertex, then $u$ gives 2 to each of its two $2$-neighbors. At the same time, the other endvertices of $u$'s incident 1-paths must be $\Delta$-vertices due to \Cref{011-5}. So, $u$ receives $\frac23$ from each of these $\Delta$-endvertices by \ru2(ii) as $\Delta\geq 6$. Moreover, $u$'s $3^+$-neighbor must be a $(\Delta-1)^+$-vertex due to \Cref{110-4}. As a result, $u$ also receives $2$ from its $(\Delta-1)^+$-neighbor as $\Delta\geq 6$. To sum up,
$$ \mu^*(u)\geq 1 - 2\cdot 2 + 2\cdot \frac23 + 2 = \frac13. $$

\item If $u$ is a $(1,0,0)$-vertex, then $u$ only gives 2 to its $2$-neighbor by \ru0. We distinguish the two following cases.
\begin{itemize}
\item If $u$ has a $5^+$-neighbor, then $u$ receives 2 by \ru1(ii). Thus,
$$ \mu^*(u)\geq 1 - 2 + 2 = 1.$$
\item If $u$ only has $4^-$-neighbors, then $u$ must have at least one $4$-neighbor due to \Cref{3-010-3}. 

Now, if $u$ has two $4$-neighbors, then it receives $\frac12$ twice by \ru1(i). Hence,
$$ \mu^*(u)\geq 1- 2 + 2\cdot\frac12 = 0.$$

If $u$ has exactly one $4$-neighbor and the other one is a $3$-neighbor, then the other endvertex of the 1-path incident to $u$ must be a $\Delta$-vertex due to \Cref{4-01(-5)0-3}. So, $u$ receives $\frac12$ from its $4$-neighbor by \ru1(i) and $\frac23$ from the $\Delta$-endvertex by \ru2(ii) as $\Delta\geq 6$. To sum up,
$$ \mu^*(u)\geq 1 - 2 + \frac12 + \frac23 = \frac16. $$
\end{itemize}

\item If $u$ is a $(0,0,0)$-vertex, then $u$ does not gives any charge away. Thus, $$ \mu^*(u)=\mu(u)=1.$$
\end{itemize}

\textbf{Case 3:} If $d(u)=4$, then recall $\mu(u)=5\cdot 4 - 14 = 6$ and observe that $u$ only gives charge 2 or $\frac12$ away respectively by \ru0 or \ru1(i). We have the following cases.
\begin{itemize}
\item If $u$ is a $(1,1,1,1)$-vertex, then $u$ gives $2$ to each of its four $2$-neighbors by \ru0. At the same time, the other endvertices of the 1-paths incident to $u$ are all $\Delta$-vertices due to \Cref{1111-5}. As a result, $u$ also receives $\frac23$ from each of the four $\Delta$-endvertices by \ru2(ii). To sum up,
$$ \mu^*(u)\geq 6 - 4\cdot 2 + 4\cdot \frac23 = \frac23.$$ 

\item If $u$ is a $(1,1,1,0)$-vertex, then $u$ gives $2$ to each of its three $2$-neighbors by \ru0. Let $v$ be the $3^+$-neighbor.

If $v$ is a $4^+$-vertex, then $u$ does not give anything to $v$. Thus,
$$ \mu^*(u)\geq 6-3\cdot 2 = 0.$$

If $v$ is a $3$-vertex, then $u$ gives $\frac12$ to $v$ by \ru1(i). Due to \Cref{4-1110-3}, the other endvertices of the 1-paths incident to $v$ must be $(\Delta-1)^+$-vertices. As a result, $v$ receives at least $\frac15$ from each of the three $(\Delta-1)^+$-endvertices as $\Delta\geq 6$ by \ru2. To sum up,
$$ \mu^*(u)\geq 6 - 3\cdot 2 - \frac12 + 3\cdot\frac15 = \frac1{10}.$$

\item If $u$ is a $(1^-,1^-,0,0)$-vertex, then at worst $u$ gives 2 twice by \ru0 and $\frac12$ twice by \ru1(i). Thus,
$$ \mu^*(u)\geq 6 - 2\cdot 2 - 2\cdot\frac12 = 1.$$
\end{itemize}

\textbf{Case 4:} If $d(u)=5$, then, at worst, $u$ gives $2+\frac15$ away along each incident edge by \ru0 (or \ru1(ii)) and \ru2(i). As a result,
$$ \mu^*(u)\geq 5d(u)-14- \left(2+\frac15\right)d(u) = \left(2+\frac45\right)\cdot 5 - 14 = 0.$$

\textbf{Case 5:} If $6\leq d(u)\leq \Delta$, then, at worst, $u$ gives $2+\frac23$ away along each incident edge by \ru0 (or \ru1(ii)) and \ru2(ii). As a result,
$$ \mu^*(u)\geq 5d(u)-14-\left(2+\frac23\right)d(u) \geq \left(2+\frac13\right)\cdot 6 - 14 = 0.$$

To conclude, we started with a charge assignment with a negative total sum, but after the discharging procedure, which preserved this sum, we end up with a non-negative one, which is a contradiction. In other words, there exists no counter-example $G_2$ to \Cref{main theorem2}.

\bibliographystyle{plain}

\end{document}